\newtheorem{thm}{Theorem}[section]
\newtheorem{lem}[thm]{Lemma}
\newtheorem{cor}[thm]{Corollary}
\newtheorem{rem}[thm]{Remark}
\newtheorem{defn}[thm]{Definition}
\newcommand{\R}{{\mathbb R}}
\newcommand{\C}{{\mathbb C}}
\newcommand{\cO}{{\mathcal O}}
\newcommand{\cC}{{\mathcal C}}
\newcommand{\one}{{\mathbf 1}}
\newcommand{\zero}{{\mathbf 0}}
\newcommand{\eps}{\epsilon}
\newif\ifmatlab\matlabtrue
\newcommand{\matlab}{MATLAB\ifmatlab\textsuperscript{\textregistered}
\matlabfalse\fi}
\newcommand{\dd}{\, \mathrm{d}}
\numberwithin{equation}{section}
\begin{document}

\title[Model reduction of Fokker--Planck and quantum Liouville equations]{Model 
reduction of controlled Fokker--Planck and Liouville--von Neumann equations}

\author[P. Benner]{Peter Benner}
\address{Computational Methods in Systems and Control Theory\\ 
Max Planck Institute for Dynamics of Complex Technical Systems\\ Sandtorstr. 1, 
D-39106 Magdeburg, Germany}
\email{benner@mpi-magdeburg.mpg.de}

\author[T. Breiten]{Tobias Breiten}
\address{Institut f\"ur Mathematik, Karl-Franzens-Universit\"at\\ Heinrichstr. 
36/III, A-8010 Graz, Austria}
\email{tobias.breiten@uni-graz.at}

\author[C. Hartmann]{Carsten Hartmann}
\address{Institut f\"ur Mathematik, Brandenburgische Technische 
Universit\"at\\Konrad-Wachsmann-Allee 1, D-03046 Cottbus, Germany}
\email{carsten.hartmann@b-tu.de}

\author[B. Schmidt]{Burkhard Schmidt}
\address{Institut f\"ur Mathematik, Freie Universit\"at Berlin\\Arnimallee 6, 
D-14195 Berlin, Germany}
\email{burkhard.schmidt@fu-berlin.de}

\begin{abstract}
  Model reduction methods for bilinear control systems are compared by means 
  of practical examples of Liouville--von Neumann and Fokker--Planck type. 
Methods based on balancing generalized system Gramians and on minimizing an 
$\mathcal{H}_2$-type cost functional are considered. The focus is on  
  the numerical implementation and a thorough comparison of the 
  methods. Structure and stability preservation are investigated, and the 
competitiveness of the approaches is shown for practically relevant, 
large-scale examples.
\end{abstract}
\keywords{Bilinear systems, model order reduction, balanced truncation, 
averaging method, Hankel singular values, generalized Lyapunov equations, 
stochastic control.}

\date{\today}

\maketitle

\tableofcontents

\section{Introduction}

Due to the growing ability to accurately manipulate single molecules by 
spectroscopic techniques, numerical methods for the control of molecular 
systems 
have recently attracted a lot of attention 
\cite{Borzi2013,Hartmann2012,Moerner2015,Zhang2014}. Key applications involve probing of 
mechanical properties of biomolecules by force microscopy and optical tweezers 
\cite{Souza2012,Hummer2010}, or the control of chemical reaction dynamics by 
temporally shaped femtosecond laser pulses in femtochemistry 
\cite{Tiwari2008,Zewail1997}.
A key feature of these small systems is that they are open systems, in that 
they are subject to noise and dissipation induced by the interaction with their 
environment, as a consequence of which the dynamics are inherently random and 
the description is on the level of probability distributions or measures rather 
than trajectories \cite{Rey-Bellet2006}.

Depending on whether or not quantum effects play a role, the evolution of the 
corresponding probability distributions is governed by parabolic partial 
differential equations of either Liouville--von Neumann or Fokker--Planck type. 
The fact that the dynamics are controlled implies that the equations are 
bilinear as the control acts as an advection term that is coupled linearly to 
the probability distribution, but the main computational bottleneck clearly is 
that the equations, in spatially semi-discretized form, are high-dimensional 
which explains why model reduction is an issue; for example, in catalysis, 
optimal shaping of laser pulses requires the iterated integration of the 
dissipative Liouville--von Neumann (LvN) equation for reduced quantum 
mechanical 
density matrices, the spatial dimension of which grows quadratically with the 
number of quantum states involved \cite{breuer1997}; cf.~\cite{LeBris2002}.

Many nonlinear control systems can be represented as bilinear systems by a 
suitable change of coordinates (as well as linear parametric systems), and it 
therefore does not come as a surprise that model reduction of bilinear control 
systems has recently been a field of intense research; see 
\cite{Baur2014,Willcox2015} and the references therein. In recent years, 
various 
model reduction techniques that were only available for linear systems have 
been 
extended to the bilinear case, among which are Krylov subspace techniques 
\cite{nice2016,bai2006,breiten2010,lin2009,phillips2003}, interpolation-based 
approaches \cite{ahmad2017,Benner2012,flagg2012,Flagg2015}, balanced model 
reduction \cite{albaiyat1993,benner2011,Boris2011,hartmann2013}, empirical POD 
\cite{condon2004,condon2005,lall2002}, or $\mathcal{H}_2$-optimal model 
reduction \cite{Benner2012,Flagg2015,zhang2002}. 
The downside of many available methods is their lack of structure preservation, 
most importantly, regarding asymptotic stability. In our case, positivity is an 
issue too, as we are dealing with probability distributions.  

In this paper we compare two different model reduction techniques that 
represent 
different philosophies of model order reduction, with the focus being on 
practical computations and numerical tests rather than a theoretical analysis. 
The first approach is based on the interpolation of the Volterra series 
representation of the system's transfer function and gives a local 
$\mathcal{H}_2$-optimal approximation, because the interpolation is chosen so 
that the system satisfies the necessary $\mathcal{H}_2$-optimality conditions 
upon convergence of the algorithm; see \cite{Benner2012} for details. The 
second approach is based on balancing the controllable and observable 
subspaces, 
 and exploits the properties of the underlying dynamical system in that it uses 
the properties of the controllability and observability Gramians to identify 
suitable small parameters that are sent to 0 to yield a reduced-order system; 
for details, we refer to \cite{hartmann2013}. 
Both methods require the solution of large-scale matrix Sylvester or Lyapunov 
equations. While the computational effort of balanced model reduction is 
essentially determined by the solution of two generalized Lyapunov equations 
for 
controllability and observability Gramians, the effort of the 
$\mathcal{H}_2$-optimal interpolation method is mainly due to the solution of 
two generalized Sylvester equations in each step of the bilinear iterative 
rational Krylov algorithm (B-IRKA). We stress that both generalized Lyapunov or 
Sylvester equations can be solved iteratively at comparable numerical cost (for 
a given accuracy), but they all require the dynamics of the uncontrolled system 
to be asymptotically stable \cite{Wachspress1988}. However, as both the 
dissipative LvN and Fokker-Planck operators have a simple eigenvalue zero, 
stability has to be enforced before solving Lyapunov or Sylvester equations, 
and 
in this paper we systematically compared stabilization techniques for both 
approaches. 

The outline of the article is as follows: In Section \ref{sec:bilin} we briefly 
discuss the basic properties of bilinear systems and set the notation for the 
remainder of the article. Model reduction by $\mathcal{H}_2$-norm minimization 
and balancing are reviewed in Sections \ref{sec:h2} and \ref{sec:balancing}, 
along with some details regarding the numerical implementation for the specific 
applications considered in this paper in Section \ref{sec:numeric}.  Finally, 
in 
Section \ref{sec:fpe} we study model reduction of the Fokker-Planck equation 
comparing balancing and $\mathcal{H}_2$-norm minimization, and in Section 
\ref{sec:lvne} we carry out a similar study for the dissipative Liouville--von 
Neumann equation. We discuss our observations in Section \ref{sec:conclusion}. 
The article contains an appendix, Appendix \ref{sec:stability}, that records 
some technical lemmas related to the asymptotic stability of bilinear systems.

\section{Bilinear control systems}\label{sec:bilin}

We start by setting the notation that will be used throughout this article. Let 
$x(t)\in\C^{n}$ be  governed  by the time-inhomogeneous differential equation
\begin{equation}\label{bilin1}
  \frac{\dd x}{\dd t} = A x + \sum_{k=1}^{m} \left(N_k x  + 
b_{k}\right)u_{k}\,,\quad x(0)=x_{0}\,,
 \end{equation}
with coefficients $A, N_{k}\in\C^{n\times n}$, $b_{k}\in\C^{n}$ and 
$u=(u_{1},\ldots, u_{m})^T$ being a vector of bounded measurable controls 
$u_{i}(t)\in U\subset\C$. We assume that not all state variables $x$ are 
relevant or observable, so we augment  (\ref{bilin1}) by a linear output 
equation 
\begin{equation}\label{bilin2}
  y =  C x\,, 
 \end{equation}
with $C\in\C^{l\times n}$, $l\le n$. The systems of equations 
(\ref{bilin1})--(\ref{bilin2}) is called a \emph{bilinear control system} with 
inputs $u(t)\in U^{m}\subset\C^{m}$ and outputs $y(t)\in\C^{l}$.

As is well-known, see e.g. \cite{Rug82,zhang2002}, an explicit output 
representation for \eqref{bilin2} can be obtained by means of successive 
approximations. The resulting so-called \emph{Volterra series} is given as
\begin{equation}\label{volterra}
\begin{aligned}
  y(t) &= \sum_{k=1}^\infty  \int_0^{\infty} \cdots \int_0^{\infty} 
\sum_{\ell_1,\dots,\ell_k=1}^m C e^{As_k} N_{\ell_1} e^{As_{k-1}} N_{\ell_2} 
\cdots e^{As_2}N_{\ell_{k-1}}e^{As_1} b_{\ell_k} \\
&\qquad \qquad \times u_{\ell_1}(t-s_k)u_{\ell_2}(t-s_k-s_{k-1}) \cdots 
u_{\ell_k}(t- \sum_{j=1}^k s_j) \, \dd s_1\cdots \dd s_k.
\end{aligned}
\end{equation}
Moreover, based on a multivariate Laplace transform of these integrands, the 
system can alternatively be analyzed in a generalized frequency domain by means 
of generalized transfer functions. Since this will not be essential for the 
results presented here, we refrain from a more detailed discussion and refer 
to, e.g., \cite{Rug82}.

\subsection{Reduced-order models}
\label{sec:reduced}

We seek coefficients $\hat{A},\hat{N}_{k}\in\C^{d\times d}$, 
$\hat{b}_{k}\in\C^{d}$ and $\hat{C}\in\C^{l\times d}$ with $d\ll n$ such that 	
\begin{equation}\label{bilinred}
\begin{aligned}
	\frac{\dd\xi}{\dd t} & =\hat{A}\xi + \sum_{k=1}^{m} 
\left(\hat{N}_{k}\xi + \hat{b}_{k}\right)u_{k}\,,\quad \xi(0) = \xi_{0}\,,\\
	\hat{y} & = \hat{C}\xi
  \end{aligned}
 \end{equation}
has an input-output behavior that is similar to (\ref{bilin1})--(\ref{bilin2}). 
In other  words, we seek a reduced-order model with the property that for any 
admissible control input $u$ (to be defined below), the error in the output 
signal,
\begin{equation}
\delta(t) = \|\hat{y}(t)-y(t)\|\,,
\end{equation}
is small, relative to $\|u\|$ (in some norm) and uniformly on bounded time 
intervals.


As will be outlined below, both model reduction schemes considered in this paper 
are closely related to the 
solutions of the following adjoint pair of generalized Lyapunov equations:  
\begin{equation}\label{P}
AP  + P A^{*} + \sum_{k=1}^{m}N_k P N_k^{*} + BB^* = 0\,
\end{equation}
and 
\begin{equation}\label{Q}
A^{*}Q + Q A + \sum_{k=1}^{m}N^{*}_k Q N_k + C^*C = 0\,,
\end{equation}
where, in the first equation, we have introduced the shorthand 
$B=(b_{1},\ldots,b_{m})\in\C^{n\times m}$.  
The Hermitian and positive 
semi-definite matrices $P,Q\in\C^{n\times n}$ are called the 
\emph{controllability and observability Gramians associated with 
(\ref{bilin1})--(\ref{bilin2})}---assuming well-posedness of the Lyapunov 
equations and hence existence and uniqueness of $P$ and $Q$. The relevance of 
the Gramians for model reduction is related to the fact that the nullspace of 
the controllability Gramian contains only states that cannot be reached by any 
bounded measurable control and that the system will not produce any output 
signal, if the dynamics is initialized in the nullspace of the observability 
Gramian \cite{isidori1973}; as a consequence one can eliminate states that 
belong to $\ker(P)\cap\ker(Q)$ without affecting the input-output behavior of 
(\ref{bilin1})--(\ref{bilin2}); cf.~\cite{albaiyat1993}.

\subsection{Standing assumptions}
\label{sec:assumptions}
The following assumptions will be used throughout to guarantee existence and 
uniqueness of the solutions to the generalized Lyapunov equations (Assumption 
1) 
and existence and uniqueness of the solution of the bilinear system 
(\ref{bilin1}) for all $t\ge 0$ (Assumptions 2 and 3):\label{ass}\\

\textbf{Assumption 1:} There exists constants $\lambda,\mu>0$, such that 
\[
\|\exp(A t)\|\le \lambda\exp(-\mu t)
\] 
and 
\[
\frac{\lambda^{2}}{2\mu}\sum_{k=1}^{m} \| N_{k}\|^{2} < 1\,, 
\]
where $\|\cdot\|=\|\cdot\|_{2}$ is the matrix 2-norm that is induced by the 
Euclidean norm $|\cdot|$.\\

\textbf{Assumption 2:} The bilinear system (\ref{bilin1})--(\ref{bilin2}) is 
bounded-input-bounded-output (BIBO) stable, i.e., there exists $M<\infty$, such 
that for any input $u$ with 
\[
\|u\|_{\infty} = \sup_{t\in[0,\infty)}|u(t)| \le   M 
\]
the output $y(t)$ is uniformly bounded.\\

\textbf{Assumption 3:} The admissible controls $u\colon[0,\infty)\to 
U^{m}\subset\C^{m}$ are continuous, bounded and square integrable, i.e., $u\in 
\cC_{b}([0,\infty),U^{m})$ with 

\[
\|u\|_{2} = \left(\int_{0}^{\infty} |u(t)|^{2}\, \dd t \right)^{1/2} <\infty .
\]
Specifically, we require that the admissible controls are uniformly bounded by 
\[
M < \frac{\mu}{\lambda}\sum_{k=1}^{m} \| N_{k}\|\,, 
\]
with $\lambda,\mu$ as in Assumption 1, which by BIBO stability (Assumptions 2) 
implies that the output $y(t)$ is bounded for all $t\ge 0$ 
(cf.~\cite{siu1991}).


\section{$\mathcal{H}_{2}$ optimal model reduction of bilinear 
systems}\label{sec:h2}

In this section, we recall some existing results on 
$\mathcal{H}_2$-optimal model order reduction for bilinear systems. For 
a more detailed presentation, see \cite{zhang2002,Benner2012,flagg2012}. 

For a better understanding of the subsequent concepts, let us briefly focus on
the linear case, i.e., $N_k=0$ in \eqref{bilin1}. Here, the Volterra series 
representation \eqref{volterra} simplifies to $y(t) = \int_0^\infty 
Ce^{As}Bu(t-s)  \dd s.$ If the input signal is a Dirac mass at $0,$ we 
obtain the \emph{impulse response} $h(t)=Ce^{At}B.$ The $\mathcal{H}_2$-norm 
for linear systems now is simply defined as the $L_2$-norm of the impulse 
response, i.e.,
\begin{align*}
  \| h\|_{L^2(0,\infty;\C^m)}^2=\int_0^\infty 
\mathrm{tr}(B^*e^{A^*t}C^*Ce^{At}B) \dd t.
\end{align*}

Based on the latter definition and the Volterra series, in  
\cite{zhang2002}, the $\mathcal{H}_2$-norm has been generalized for bilinear 
systems as follows.
\begin{defn}
Let $\Sigma=(A,N_1,\dots,N_k,B,C)$ denote a bilinear system as in 
\eqref{bilin1}. We then define its $\mathcal{H}_2$-norm by
  $$\|\Sigma \|_{\mathcal{H}_2}^2 = \mathrm{tr} \left(\sum_{k=1}^\infty \int 
_0^\infty \cdots \int_0^\infty \sum_{\ell_1,\dots,\ell_k=1}^m 
g_k^{(\ell_1,\dots,\ell_k)} (g_k^{(\ell_1,\dots,\ell_k)})^* \dd s_1\cdots \dd 
s_k 
\right),$$ 
with $ g_k^{(\ell_1,\dots,\ell_k)} (s_1,\dots,s_k) = Ce^{As_k}N_{\ell_1} 
e^{As_{k-1}} N_{\ell_2} \cdots e^{As_1} b_{\ell_k}.$
\end{defn}
Obviously, for a bilinear system having a finite $\mathcal{H}_2$-norm, it is 
required that the system is stable in the linear sense, i.e., $A$ has only 
eigenvalues in $\C_-.$ Moreover, the matrices $N_k$ have to be sufficiently 
bounded. From \cite{zhang2002}, let us recall that Assumption 1 ensures that 
the bilinear system under consideration has a finite $\mathcal{H}_2$-norm, 
which, moreover, can be computed by means of the solution $P$ and $Q$ of the 
generalized Lyapunov equations \eqref{P} and \eqref{Q}, respectively. In 
particular, we have that
\begin{align*}
 \| \Sigma\|_{\mathcal{H}_2}^2 = \mathrm{tr}(CPC^*) = \mathrm{tr}(B^*QB).
\end{align*}
Given a fixed system dimension $l$ 
the goal of $\mathcal{H}_2$-optimal model order reduction now is to construct a 
reduced-order bilinear system $\tilde{\Sigma}$ such that 
$$\|\Sigma-\tilde{\Sigma} \|_{\mathcal{H}_2}= 
\min_{\substack{\dim(\hat{\Sigma})\ = \ l \ \\
\hat{\Sigma} \text{ stable}}} \|\Sigma-\tilde{\Sigma}\|_{\mathcal{H}_2}.$$
Unfortunately, already in the linear case this is a highly nonconvex 
minimization problem such that finding a global minimizer is out of reach. 
Instead, we aim at constructing $\tilde{\Sigma}$ such that first-order 
necessary conditions for $\mathcal{H}_2$-optimality are fulfilled. In 
\cite{zhang2002}, the optimality conditions from 
\cite{Wil70} are extended to the bilinear case. More precisely,  it is 
shown that an 
$\mathcal{H}_2$-optimal reduced-order model is defined by a Petrov-Galerkin 
projection of the original model. Given a reduced-order system $\hat{\Sigma},$ 
let us consider the associated error system
\begin{equation}\label{error_system}
  \begin{aligned}
    A_e = \begin{bmatrix} A & 0 \\ 0 & \hat{A} \end{bmatrix}, \quad 
  \end{aligned}
  \begin{aligned}
    N_{k,e} = \begin{bmatrix} N_k & 0 \\ 0 & \hat{N}_k \end{bmatrix}, \quad 
  \end{aligned}
  \begin{aligned}
    B_e = \begin{bmatrix} B \\ \hat{B} \end{bmatrix}, \quad 
  \end{aligned}
  \begin{aligned}
    C_e = \begin{bmatrix} C & - \hat{C} \end{bmatrix},
  \end{aligned}
\end{equation}
as well as the generalized Lyapunov equations associated with it
\begin{equation}\label{eq:err_genlyap}
\begin{aligned}
  A_e P_e + P_e A_e^* + \sum_{k=1}^m N_{k,e} P_e N_{k,e}^* + B_e B_e^* &= 0, \\
  A_e^* Q_e + Q_e A_e + \sum_{k=1}^m N_{k,e}^* Q_e N_{k,e} + C_e^* C_e &= 0.
\end{aligned}
\end{equation}
Assuming the partitioning 
\begin{equation}\label{part_Gramians}
  P_e = \begin{bmatrix} P & X \\ X^* & \hat{P} \end{bmatrix}, \quad 
  Q_e = \begin{bmatrix} Q & Y \\ Y^* & \hat{Q} \end{bmatrix},
\end{equation}
the first-order necessary optimality conditions now are
\begin{equation}\label{h2_oc}
 \begin{aligned}
   Y^* A X + \hat{Q}^*\hat{A} \hat{P} &= 0, && Y^* N_k X + \hat{Q}^* 
\hat{N}_k \hat{P} &= 0, \\   Y^* B + \hat{Q}^*\hat{B} &= 0, && CX - 
\hat{C}\hat{P}&=0.
 \end{aligned}
\end{equation}
In \cite{zhang2002} the authors have proposed a gradient flow technique to 
construct a reduced-order model satisfying \eqref{h2_oc}. Since here we are 
interested in computations for large-scale systems for which this technique is 
not feasible, we instead use the iterative method from \cite{Benner2012}. The 
main idea is inspired by the \emph{iterative rational Krylov algorithm} from 
\cite{GugAB08} and relies 
on solving generalized Sylvester equations of the form
\begin{align*}
  AX + X\hat{A}^* + \sum_{k=1}^m N_k X \hat{N}_k^* + B \hat{B}^* &= 0, \\
  A^*Y + Y\hat{A} + \sum_{k=1}^m N_k^* Y \hat{N}_k - C^* \hat{C} &= 0.
\end{align*}
Based on a given reduced-order model 
$(\hat{A}_i,\hat{N}_{k,i},\hat{B}_i,\hat{C}_i)$, the subspaces spanned by 
columns of the 
solutions $X_i,Y_i \in \C^{n\times l}$ are used to generate an updated 
reduced-order model. More precisely, given unitary matrices $V_i,W_i \in 
\mathbb C^{n\times l}$ such that $\mathrm{span}(V_i)=\mathrm{span}(X_i)$ and 
$\mathrm{span}(W_i)=\mathrm{span}(Y_i),$ we set
\begin{align*}
  \hat{A}_{i+1} &= (W_i^*V_i)^{-1}W_i^*AV_i,  \ \ \hat{N}_{i+1} = 
(W_i^*V_i)^{-1}W_i^*N_kV_i, \\
 \hat{B}_{i+1} &= (W_i^*V_i)^{-1}W_i^*B , \ \ \hat{C}_{i+1}=CV_i.
\end{align*}
This type of fixed-point iteration is repeated until the 
reduced-order model is numerically converged up to a prescribed tolerance. For 
more details on the iteration, we also refer to 
\cite{Benner2012}.  

\section{Balanced model reduction for bilinear systems}\label{sec:balancing}

We shall briefly explain model reduction based on balancing controllability and 
observability. To this end we assume that the generalized Gramian matrices 
$P,Q$ are both Hermitian positive definite which is guaranteed by the 
assumption 
that 
the bilinear system (\ref{bilin1})--(\ref{bilin2}) is completely controllable 
and observable:\label{ass2} \\

\textbf{Assumption 4:} The matrix pair $(A,B)$ is controllable, i.e., 
\[
{\rm rank}(B\,AB\,A^{2}B\ldots A^{n-1}B)=n\,.
\]

\textbf{Assumption 5:} The matrix pair $(A,C)$ is observable, i.e., 
\[
{\rm rank}(C^{*}\,A^{*}C^{*}\,A^{2}C^{*}\ldots A^{n-1}C^{*})=n\,.
\]

\subsection{Singularly perturbed bilinear systems}
\label{sec:PT}
We consider a balancing transformation $x\mapsto T^{-1}x$ under which the 
Gramians transform according to \cite{moore1981}
\begin{equation}\label{balance1}
T^{-1}Q \left(T^{-1}\right)^{*}=\Sigma = 
T^*PT\,,
\end{equation}
where the diagonal matrix $\Sigma={\rm 
diag}(\sigma_{1},\sigma_2,\ldots,\sigma_{n})$ with $\sigma_1\ge 
\sigma_2\ge\ldots\ge\sigma_n>0$ contains the real-valued Hankel singular values 
(HSV) of the system. 
 Under the linear map $T$, the coefficients of (\ref{bilin1})--(\ref{bilin2}) 
transform according to 
\begin{equation}\label{balance2}
(A,N_{k},B,C)\mapsto (T^{-1}AT,\,T^{-1}N_{k}T,\,T^{-1}B,\, CT)\,,\quad 
k=1,\ldots,m\,.
\end{equation}
As the Hankel singular values are the square roots of the eigenvalues of the 
product $QP$, they are independent of the choice of coordinates. 
It can be shown (e.g.~\cite{antoulas2005}) that a balancing transformation that 
makes the two Gramians $Q$ and $P$ equal and diagonal is given by the matrix 
$T=\Sigma^{-\frac{1}{2}} V^T R$ with inverse $T^{-1}=S^TU\Sigma^{-1/2}$ where 
the matrices $U,V,S,R$ are defined by the Cholesky decompositions  $P=S^TS$ and 
$Q=R^TR$ of the two  Gramians solving \eqref{P} and \eqref{Q}, and their 
singular value decomposition $SR^T=U\Sigma V^T$.

Now suppose that $\Sigma=(\Sigma_1,\Sigma_2)$ with $\Sigma_{1}\in\R^{d\times 
d}$ and $\Sigma_{2}\in\R^{(n-d)\times(n-d)}$ corresponding to the splitting of 
the system states into relevant and irrelevant states. Further assume that 
$\Sigma_{2}\ll\Sigma_{1}$ in the sense that the smallest entry 
of $\Sigma_{1}$ is much larger than the largest entry of $\Sigma_{2}$. 
The rationale of balanced model reduction is based on a continuity argument: if 
the 
space of the uncontrollable and unobservable states is spanned by the singular 
vectors corresponding to $\Sigma_{2}=0$, then, by continuity of the solution of 
(\ref{bilin1})--(\ref{bilin2}) on the system's coefficients, small singular 
values should indicate hardly controllable and observable states that do not 
contribute much to the input-output behavior of the system.    

Using the notation $\Sigma_{2}=\cO(\eps)$ with $0<\eps\ll 1$ and partitioning 
the balanced coefficients according to the splitting into large and small HSV, 
then yields the following singularly perturbed system of equations (see 
\cite{hartmann2013,hartmann2010}): 
\begin{equation}\label{SF0}
\begin{split}
\frac{\dd z^{\eps}_{1}}{\dd t} & = \tilde{A}_{11}z^{\eps}_{1} + 
\frac{1}{\sqrt{\eps}} \tilde{A}_{12}z^{\eps}_{2} + 
\sum_{k=1}^{m}\left(\tilde{N}_{k,11}z^{\eps}_{1} + \frac{1}{\sqrt{\eps}} 
\tilde{N}_{k,12}z^{\eps}_{2} + \tilde{b}_{k,1} \right) u_{k}\\
\sqrt{\eps}\frac{\dd z^{\eps}_{2}}{\dd t} & = \tilde{A}_{21}z^{\eps}_{1} + 
\frac{1}{\sqrt{\eps}} \tilde{A}_{22}z^{\eps}_{2} +  
\sum_{k=1}^{m}\left(\tilde{N}_{k,21}z^{\eps}_{1} + \frac{1}{\sqrt{\eps}} 
\tilde{N}_{k,22}z^{\eps}_{2} + \tilde{b}_{k,2} \right) u_{k}\\
y^{\eps} & = \tilde{C}_{1}z^{\eps}_{1} + 
\frac{1}{\sqrt{\eps}}\tilde{C}_{2}z^{\eps}_{2}\,
\end{split}
\end{equation}
Here $z=T^{-1}x$, with $z=(z_{1},z_{2})\in\C^{d}\times\C^{n-d}$, denotes the 
balanced state vector where the splitting into $z_{1}$, $z_{2}$ is in 
accordance 
with the splitting of the HSV into $\Sigma_{1}$ and $\Sigma_{2}$. The splitting 
of the balanced coefficients 
\begin{equation}\label{balCoeff}
\tilde{A}=T^{-1}AT,\,\tilde{N}_{k}=T^{-1}N_{k}T,\,\tilde{b}_{k}=T^{-1}b_{k},\,
\tilde{C}=CT\,
\end{equation}
into $\tilde{A}_{11}$, $\tilde{A}_{12}$ etc.~can be understood accordingly.

\subsection{An averaging principle for bilinear systems}\label{sec:adiabatic}

In order to derive reduced-order models of (\ref{bilin1})--(\ref{bilin2}), we 
consider the limit $\eps\to 0$ in (\ref{SF0}). This amounts to the limit of 
vanishing small HSV $\Sigma_{2}$ in the original bilinear system. 

We suppose that Assumptions 1--5 hold for all $\eps>0$. As we will show in 
Appendix \ref{sec:stability}, the results in \cite{BDRR2016} can be modified to 
show that the  matrices $\tilde{A}_{11}$ and $ \tilde{A}_{22}$ are Hurwitz, and 
that their eigenvalues 
are bounded away from the imaginary axis. In this case, BIBO stability of the 
system together with the assumptions on the admissible controls imply that 
$z^{\eps}_{2}\to 0$ pointwise for all $t> 0$ as $\eps\to 0$. However, the rate 
at which $z^{\eps}_{2}$ tends to zero and hence the limiting bilinear systems 
clearly depends on the controls $u$, especially when $u$ depends on $\eps$. We 
give only a formal justification of the different candidate equations that can 
be obtained in the limit of vanishing small HSV and refer to \cite{hartmann2013} 
for further details. 

\subsection{Balanced truncation}
\label{sec:bt}

If $z^{\eps}_{2}=o(\sqrt{\eps})$, we expect that the first two equations in 
(\ref{SF0}) decouple as $\eps\to 0$, which implies that the limiting bilinear 
system will be of the form 
\begin{equation}
\label{eq:bt}
\begin{split}
\frac{\dd z_{1}}{\dd t} & = \tilde{A}_{11}z_{1} + 
\sum_{k=1}^{m}\left(\tilde{N}_{k,11}z_{1} + \tilde{b}_{k,1} \right) u_{k}\\
y & = \tilde{C}_{1}z_{1} \,.
\end{split}
\end{equation}
The assumption that $z^{\eps}_{2}$ goes to zero faster than $\sqrt{\eps}$ is 
the basis of the traditional \emph{balanced truncation} approach in which the 
weakly controllable and observable degrees of freedom are eliminated by 
projecting the equations to the linear subspace 
\[
S_{1} = \{(z_{1},z_{2})\in\C^{n}\colon z_{2}=0\}\simeq \C^{d}.
\]  
The validity of the approximation for all $t\ge 0$ requires that 
$z_{2}^{\eps}(0)=0$; cf.~Remark \ref{rem:avg} below. 

\subsection{Singular perturbation} 
\label{sec:avg}

If $z^{\eps}_{2}=\cO(\sqrt{\eps})$ the $z_{1}$, $z_{2}$ equations do not 
decouple as $\eps\to 0$, and the limiting equation turns out to be different 
from (\ref{eq:bt}). To reveal it, it is convenient to introduce scaled 
variables 
by $z_{2}= \sqrt{\eps}\zeta$ by which (\ref{SF0}) becomes 
\begin{equation}\label{SF}
\begin{split}
\frac{\dd z^{\eps}_{1}}{\dd t} & = \tilde{A}_{11}z^{\eps}_{1} + 
\tilde{A}_{12}\zeta^{\eps} + \sum_{k=1}^{m}\left(\tilde{N}_{k,11}z^{\eps}_{1} + 
\tilde{N}_{k,12}\zeta^{\eps} + \tilde{b}_{k,1} \right) u_{k}\\
\eps\frac{\dd\zeta^{\eps}}{\dd t} & = \tilde{A}_{21}z^{\eps}_{1} + 
\tilde{A}_{22}\zeta^{\eps} +  \sum_{k=1}^{m}\left(\tilde{N}_{k,21}z^{\eps}_{1} 
+ 
\tilde{N}_{k,22}\zeta^{\eps} + \tilde{b}_{k,2} \right) u_{k}\\
y^{\eps} & = \tilde{C}_{1}z^{\eps}_{1} + \tilde{C}_{2}\zeta^{\eps}\,.
\end{split}
\end{equation}
Equation (\ref{SF}) is an instance of a slow-fast system with $z_{1}$ being the 
slow variable and $\zeta=z_{2}/\sqrt{\eps}$ being fast, and 
for non-pathological controls $u$, the averaging principle applies 
\cite{grammel1997}. The idea of the averaging principle is to average the fast 
variables in the equation for $z_{1}$ against their invariant measure, because 
whenever $\eps$ is sufficiently small, the fast variables relax to their 
invariant measure while the slow variables are effectively frozen, and 
therefore 
the slow dynamics move under the average influence of the fast variables. This 
clearly requires that the convergence of the fast dynamics is sufficiently fast 
and independent of the initial conditions. The auxiliary fast subsystem for 
frozen slow variable $z_{1}$ reads
\begin{equation}\label{fast1}
\frac{\dd\tilde{\zeta}}{\dd\tau} = \tilde{A}_{22}\left(\tilde{\zeta}+ 
\tilde{A}_{22}^{-1} \tilde{A}_{21}z_{1}\right) +  
\sum_{k=1}^{m}\left(\tilde{N}_{k,22}\left(\tilde{\zeta}+ \tilde{A}_{22}^{-1} 
\tilde{A}_{21}z_{1}\right) + \tilde{B}_{k,2} \right) \tilde{u}_{k}\,,
\end{equation}
with 
\begin{equation}\label{fast2}
\tilde{B}_{k,2} = \left(\tilde{N}_{k,21} - \tilde{N}_{k,22}\tilde{A}_{22}^{-1} 
\tilde{A}_{21}z_{1}\right) + \tilde{b}_{k,2}\,.
\end{equation}
It is obtained from (\ref{SF}) by rescaling the equations according to $\tau= 
t/ 
\eps$ and  $\tilde{\zeta}(\tau)=\zeta^{\eps}(\eps\tau)$, 
$\tilde{u}(\tau)=u(\eps\tau)$ and sending $\eps\to 0$. Since the admissible 
controls decay on time scales that are of order one in $t$ (i.e.~$\cO(1/\eps)$ 
in $\tau$), it follows that 
\begin{equation*}
\lim_{\tau\to\infty} \tilde{\zeta}(\tau;z_{1}) = - \tilde{A}_{22}^{-1} 
\tilde{A}_{21}z_{1}\,.
\end{equation*}
In other words, for fixed $z_{1}$ the fast dynamics converge to the Dirac mass 
$\delta_{m}$ at $m=-\tilde{A}_{22}^{-1} \tilde{A}_{21}z_{1}$. This can be 
rephrased by saying that for all admissible controls and in the limit $\eps\to 
0$ the dynamics (\ref{SF}) collapse to the invariant subspace 
\[
S_{2} = \{(z_{1},z_{2})\in\C^{n}\colon z_{2} = - \tilde{A}_{22}^{-1} 
\tilde{A}_{21}z_{1}\}\simeq \C^{d}.
\]
Averaging the fast variables in (\ref{SF}) against their invariant measure 
$\delta_{m}$, then yields the averaged equation for the slow variables: 
\begin{eqnarray}
\label{eq:avg}
\begin{aligned}
\frac{\dd z_{1}}{\dd t} & = \hat{A} z_{1} +  
\sum_{k=1}^{m}\left(\hat{N}_{k}z_{1} + \tilde{b}_{1,k} \right) u_k\\
y & = \hat{C}z_{1}\,, 
\end{aligned}
\end{eqnarray}
with the coefficients 
\begin{equation}
\label{eq:avgCoeff}
\begin{aligned}
\hat{A} & = \tilde{A}_{11} - \tilde{A}_{12}\tilde{A}_{22}^{-1}\tilde{A}_{21}\\
\hat{N}_{k} & = \tilde{N}_{k,11} - 
\tilde{N}_{k,12}\tilde{A}_{22}^{-1}\tilde{A}_{21}\\
\hat{C} & = \tilde{C}_{1} - \tilde{C}_{2}\tilde{A}_{22}^{-1}\tilde{A}_{21}\,.
\end{aligned}
\end{equation}

The situation here is special, in that the controls decay sufficiently fast so 
that the invariant measure of the fast variables is independent of $u$. For 
other choices of admissible controls, however, the invariant measure may depend 
on $u$, which then gives rise to averaged equations with measure-valued right 
hand side \cite{gaitsgory1992,grammel1997,vigodner1997}. The following 
approximation result has been proved in \cite{hartmann2013}; 
cf.~\cite{watbled2005}.

\begin{thm}\label{thm:avg}
Let $u=u^{\eps,\gamma}$ in (\ref{SF}) be admissible, satisfying $u(t) = 
u(t/\eps^{\gamma})$ for some $0<\gamma<1$. Further let $y^{\eps}(t)$  be the 
observed solution of (\ref{SF}) with consistent initial conditions 
$(z_{1}^{\eps}(0),\zeta^{\eps}(0))=(\eta,-\tilde{A}_{22}^{-1} 
\tilde{A}_{21}\eta)$, and let $\bar{y}(t)$ denote the output of the averaged 
equation (\ref{eq:avg}) on the bounded time interval $[0,T]$, starting from the 
same $z_{1}(0)=\eta$. Then there exists a constant $C=C(T)$, such that 
\begin{equation*} 
\sup_{0\le t\le T}\left|  y^{\eps}(t) - \hat{y}(t) \right| \le C 
\eps^{\gamma}\,.
\end{equation*}
\end{thm}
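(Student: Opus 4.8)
The plan is to treat \eqref{SF} as a standard slow--fast system and to establish a Tikhonov/averaging-type estimate, separating a \emph{slow} error from a \emph{fast} deviation off the slow manifold. First I would introduce the deviation from the control-free slow manifold,
\[
\rho^{\eps} := \zeta^{\eps} + \tilde{A}_{22}^{-1}\tilde{A}_{21}z_{1}^{\eps},
\]
so that the consistent initial condition $\zeta^{\eps}(0)=-\tilde{A}_{22}^{-1}\tilde{A}_{21}\eta$ gives exactly $\rho^{\eps}(0)=0$. Writing $\bar{y}=\hat{C}\bar{z}_{1}$ for the averaged output and substituting $\zeta^{\eps}=\rho^{\eps}-\tilde{A}_{22}^{-1}\tilde{A}_{21}z_{1}^{\eps}$ into $y^{\eps}=\tilde{C}_{1}z_{1}^{\eps}+\tilde{C}_{2}\zeta^{\eps}$ collapses the output error, with $\hat{C}$ as in \eqref{eq:avgCoeff}, to the single clean identity
\[
y^{\eps}-\bar{y} = \hat{C}\,(z_{1}^{\eps}-\bar{z}_{1}) + \tilde{C}_{2}\rho^{\eps},
\]
which I would then bound term by term.

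Next I would derive the two governing equations. Inserting the same substitution into the slow equation of \eqref{SF} and subtracting the averaged equation \eqref{eq:avg} yields, for $e:=z_{1}^{\eps}-\bar{z}_{1}$ with $e(0)=0$, a linear nonautonomous ODE $\dot{e}=\hat{A}e+\sum_{k}\hat{N}_{k}e\,u_{k}+F^{\eps}$ whose forcing $F^{\eps}=\tilde{A}_{12}\rho^{\eps}+\sum_{k}\tilde{N}_{k,12}\rho^{\eps}u_{k}$ is $O(|\rho^{\eps}|)$ since $u$ is bounded; as the coefficients are bounded on $[0,T]$, Gronwall gives $\sup_{[0,T]}|e|\lesssim\int_{0}^{T}|\rho^{\eps}|\,\dd s$. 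For the fast deviation, the same substitution turns the fast equation into
\[
\eps\,\dot{\rho}^{\eps} = \tilde{A}_{22}\rho^{\eps} + g^{\eps}(t) + \eps\,\tilde{A}_{22}^{-1}\tilde{A}_{21}\dot{z}_{1}^{\eps},
\qquad g^{\eps}(t)=O\!\left(|u(t/\eps^{\gamma})|\right),
\]
where $g^{\eps}$ collects the control-weighted terms and the order estimate uses the uniform-in-$\eps$ a priori bounds on $(z_{1}^{\eps},\zeta^{\eps})$ that follow from BIBO stability and the control bound (Assumptions 2--3).

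The rate then comes from variation of constants together with the spectral gap of $\tilde{A}_{22}$. By the analysis announced for Appendix~\ref{sec:stability}, $\tilde{A}_{22}$ is Hurwitz with eigenvalues bounded away from the imaginary axis uniformly in $\eps$, so $\|e^{\tilde{A}_{22}\tau/\eps}\|\le\tilde{\lambda}e^{-\tilde{\mu}\tau/\eps}$. Writing $\rho^{\eps}(t)=\tfrac{1}{\eps}\int_{0}^{t}e^{\tilde{A}_{22}(t-s)/\eps}\big(g^{\eps}(s)+\eps(\cdots)\big)\,\dd s$, the $O(\eps)$ forcing contributes $O(\eps)$, while integrating the control forcing in time and rescaling $s=\eps^{\gamma}\sigma$ gives
\[
\int_{0}^{T}|\rho^{\eps}(s)|\,\dd s \;\lesssim\; \eps + \int_{0}^{T}|u(s/\eps^{\gamma})|\,\dd s \;=\; \eps + \eps^{\gamma}\!\!\int_{0}^{T\eps^{-\gamma}}\!\!|u(\sigma)|\,\dd\sigma \;\lesssim\; \eps^{\gamma},
\]
using integrability of $u$ (square integrability together with boundedness). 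Feeding this into the Gronwall bound yields $\sup_{[0,T]}|\hat{C}e|\lesssim\eps^{\gamma}$, so the slow part of the output error already has the claimed rate.

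The hard part will be the fast contribution $\tilde{C}_{2}\rho^{\eps}$ and, above all, the uniformity of the bound on the \emph{full} interval $[0,T]$ including the initial layer. After the $O(\eps)$ relaxation, $\rho^{\eps}$ tracks its quasi-steady value $-\tilde{A}_{22}^{-1}g^{\eps}(t)=O(|u(t/\eps^{\gamma})|)$, which is only $O(1)$---not $o(1)$---on the layer $t=O(\eps^{\gamma})$ where the rescaled control has not yet decayed, so a crude supremum of $\tilde{C}_{2}\rho^{\eps}$ does not immediately produce the rate. Reconciling this with the uniform $O(\eps^{\gamma})$ estimate is the crux: it is precisely here that the consistent initial condition $\rho^{\eps}(0)=0$, the uniform spectral gap of $\tilde{A}_{22}$, and the sharp decay of $u(\cdot/\eps^{\gamma})$ must be combined to control the boundary-layer contribution, following the averaging arguments of \cite{hartmann2013,watbled2005}. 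The interior estimate, for $t$ bounded away from the layer, and the slow error are by contrast routine, so I would invest essentially all of the care in the layer analysis.
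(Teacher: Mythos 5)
First, a point of reference: the paper does not actually prove Theorem~\ref{thm:avg} --- it is quoted from \cite{hartmann2013} (cf.~\cite{watbled2005}), and Section~\ref{sec:avg} only gives the formal derivation of \eqref{eq:avg}--\eqref{eq:avgCoeff}. Measured against what a complete proof must deliver, your setup is the right one and is consistent with the paper's formal picture: the deviation $\rho^{\eps}=\zeta^{\eps}+\tilde{A}_{22}^{-1}\tilde{A}_{21}z_{1}^{\eps}$ from the invariant subspace $S_{2}$, the exact output identity $y^{\eps}-\bar{y}=\hat{C}(z_{1}^{\eps}-\bar{z}_{1})+\tilde{C}_{2}\rho^{\eps}$, the Gronwall structure for the slow error, and the appeal to Lemma~\ref{lem:Astability1} for the uniform spectral gap of $\tilde{A}_{22}$ are all correct. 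But there are two genuine gaps. The first is quantitative: your central estimate $\eps^{\gamma}\int_{0}^{T\eps^{-\gamma}}|u(\sigma)|\,\dd\sigma\lesssim\eps^{\gamma}$ silently assumes $u\in L^{1}(0,\infty)$, and your justification --- ``square integrability together with boundedness'' --- is false, since $L^{2}\cap L^{\infty}\not\subset L^{1}$ (e.g.~$u(\sigma)=(1+\sigma)^{-3/4}$ is admissible under Assumption~3 but not integrable). Under the standing assumptions alone, Cauchy--Schwarz gives only $\eps^{\gamma}\,\|u\|_{2}\,(T\eps^{-\gamma})^{1/2}=T^{1/2}\|u\|_{2}\,\eps^{\gamma/2}$, so your slow-part bound reaches rate $\eps^{\gamma/2}$, not the claimed $\eps^{\gamma}$, unless you add an integrability hypothesis on $u$.

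The second gap is the one you yourself flag and then do not close: the fast output contribution $\tilde{C}_{2}\rho^{\eps}$. As your own quasi-steady analysis shows, on the window $t=\cO(\eps^{\gamma})$ where the rescaled control is active, $\rho^{\eps}$ tracks $-\tilde{A}_{22}^{-1}g^{\eps}(t)=\cO(|u(t/\eps^{\gamma})|)=\cO(1)$, so nothing in your write-up bounds $\sup_{0\le t\le T}|\tilde{C}_{2}\rho^{\eps}(t)|$ by $C\eps^{\gamma}$; deferring precisely this step to ``the averaging arguments of \cite{hartmann2013,watbled2005}'' means the proposal proves at most an $\cO(\eps^{\gamma/2})$ estimate away from the layer and an $\cO(1)$ bound inside it --- but the uniform-in-$t$ control of the layer is exactly where the content of the theorem resides, so the theorem is not established. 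A smaller but related issue: you invoke BIBO stability (Assumption~2) for uniform-in-$\eps$ a priori bounds on $(z_{1}^{\eps},\zeta^{\eps})$, but BIBO stability bounds outputs, not states; the state bound you need for $g^{\eps}(t)=\cO(|u(t/\eps^{\gamma})|)$ requires its own Gronwall argument combining the Hurwitz property of $\tilde{A}_{22}$ (Lemma~\ref{lem:Astability1}) with the amplitude bound on admissible controls in Assumption~3.
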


We should stress that it is possible to relax the condition on the initial 
conditions that guarantees that $(z_{1}^{\eps}(0),\zeta^{\eps}(0))\in S_{2}$. 
In 
this case there will be a transient initial layer of thickness 
$\cO(\sqrt{\eps})$, in which there is a rapid adjustment of the initial 
conditions to the invariant subspace $S_{2}$ and during which the averaged 
dynamics deviates from the original dynamics, with an $\cO(1)$ error. A uniform 
approximation on $[0,T]$ can then be obtained by a so called \emph{matched 
asymptotic expansion} that matches an initial layer approximation with the 
averaged dynamics \cite{omalley1991}.

\begin{rem}\label{rem:avg}
 For single-input systems ($m=1$), a sufficient condition for BIBO stability of 
(\ref{bilin1}) is that $A$ is Hurwitz, in which case there exists a $\delta>0$, 
such that $A+s N$ is Hurwitz for all $s\in[-\delta,\delta]$. 
%
The stability of $A$ is 
inherited by the Schur complement $\hat{A}$, in (\ref{eq:avg}) and consequently 
$\hat{A} + s\hat{N}$ inherits stability, with a possibly smaller stability 
region. (See Appendix \ref{sec:stability} for details.) Hence reduced 
single-input systems are again BIBO stable.
\end{rem}


\section{Numerical details}
\label{sec:numeric}

Before  testing $\mathcal{H}_{2}$ and balanced model reduction for examples from 
stochastic control and quantum dynamics, see Secs.~\ref{sec:fpe} and 
\ref{sec:lvne}, respectively, we will first focus 
on the numerical issues related to the scaling of the controls and the 
preprocessing of the unstable $A$ matrix. 

\subsection{Structured bilinear systems}
\label{sec:stationary}
The subsequent numerical examples share several special properties that result  
from a physical interpretation and that require a careful numerical 
treatment. In this section, we provide some insight in how the model reduction 
methods are applied to the particularly structured bilinear systems. In fact, 
in the FPE as well as in the LvNE context, the initial setup leads 
to a purely bilinear system of the form
\begin{equation}\label{eq:pur_bil}
  \begin{aligned}
 \dot{x}(t) &= Ax(t) + \sum_{k=1}^m N_kx(t)u_k(t), \quad x(0)=x_0, \\
  y(t)&=Cx(t).
  \end{aligned}
\end{equation}
In either case, the system exhibits a nontrivial stationary solution $x_e$ 
corresponding to a simple eigenvalue $0$ of the system matrix $A,$ i.e., 
$Ax_e=0.$ For the applications we are interested in the deviation of the state 
$x$ from the stationary solution. Let us therefore introduce the reference 
state $\tilde{x}=x-x_e$ that is governed by the bilinear system
\begin{equation}\label{eq:pur_bil_shifted}
 \begin{aligned}
   \dot{\tilde{x}}(t)&= A\tilde{x}(t) + \sum_{k=1}^m N_k \tilde{x}(t) u_k(t) + 
\underbrace{\begin{bmatrix}N_1 x_e,\dots,N_kx_e\end{bmatrix}}_{B}u(t), \quad 
\tilde{x}(0) = x_0-x_e, \\ 
y(t) &= C\tilde{x}(t)+Cx_e,
 \end{aligned}
\end{equation}
where the term $Cx_e$ can be interpreted as a constant nonzero feedthrough $D$ 
of the system. For the reduced-order model, we thus may simply set 
$\hat{D}=Cx_e$ 
such that we can simply focus on the output operator $C.$ In accordance with 
standard model reduction concepts that assume a homogeneous initial condition, 
here we assume that the initial state of the original system is the 
equilibrium, i.e., $\tilde{x}(0)=x_e-x_e=0.$ While the system now has been 
transformed from a purely bilinear into a standard bilinear system, we still 
have to deal with the problem of a system matrix that is not asymptotically 
stable. In what follows, we present two different techniques that bypass this 
problem. 

\subsection{Sparsity preserving projection}
\label{sec:project}
In our examples, the system matrices are mass and positivity preserving. 
Numerically this is reflected in the fact that the system matrix $A$ as well as 
the bilinear coupling matrices have zero row sum. In other words, the 
vector $\mathbf 1_n:=\begin{bmatrix}1,\dots,1\end{bmatrix}^*\in \C^n$ 
satisfies $\mathbf 
1_n^* A = \mathbf 1_n^*N_k = 0.$ The intuitive idea now is splitting the state 
into the direct sum of the asymptotically stable subspace and the 
eigenspace associated with the eigenvalue $0.$ Since a straightforward 
implementation in general will destroy the sparsity pattern of the matrices, we 
suggest to use a particular decomposition that has been introduced in a 
 similar setup in \cite{BreKP16}. Define the matrix
\begin{align*}
  R = \begin{bmatrix} I & 0 \\ 0 & 0 \end{bmatrix}+ x_e e_n^* - e_n 
\begin{bmatrix}\mathbf{1}_{n-1}^* & 0 \end{bmatrix},
\end{align*}
where $e_n$ denotes the $n$-th unit vector in $\C^n.$ An easy calculation now 
shows that the inverse $R^{-1}$ is given as
\begin{align*}
  R^{-1} =  \begin{bmatrix} I & 0 \\ 0 & 0 \end{bmatrix} + e_n \mathbf{1}^* - 
\begin{bmatrix} \tilde{x}_e \\ 0 \end{bmatrix} \mathbf{1}^*, 
\end{align*}
where the vector $\tilde{x}_e \in \C^{n-1}$ consists of the first $n-1$ 
components of $x_e \in \C^n.$ Assume that the matrices $A,N_k$ and $B$ are 
partitioned as follows
\begin{align*}
  A = \begin{bmatrix} \tilde{A} & A_{(1:n-1,n)} \\ * & * \end{bmatrix}, \ \ 
  N_k = \begin{bmatrix} \tilde{N}_k & N_{k,(1:n-1,n)} \\ * & * \end{bmatrix}, \ 
\ B=\begin{bmatrix} \tilde{B} \\ * \end{bmatrix},
\end{align*}
with $\tilde{A},\tilde{N}_k \in \C^{n-1\times n-1 }$ and $\tilde{B} \in 
\C^{n-1\times m}.$ Finally, a state space transformation $z:=R^{-1}\tilde{x}$ 
yields the equivalent bilinear system
\begin{equation}\label{eq:bil_transformed}
  \begin{aligned}
  \dot{z}(t) &= (R^{-1}AR) z(t) + \sum_{k=1}^m (R^{-1}N_k R) z(t) u_k(t) + 
(R^{-1}B) u(t), \quad z(0)= 0, \\
y(t) &= (CR) z(t) + Cx_e.
\end{aligned}
\end{equation}
Making use of the relations $Ax_e=0=A^* \mathbf{1}_n=N_k^*\mathbf{1}_n,$ we 
conclude that the last row of $R^{-1}AR,R^{-1}N_kR$ and $R^{-1}B=R^{-1}N_k x_e$ 
is zero. This implies that the last component of $z(t)$ is constant, and, due 
to $z(0)=0$ vanishes for all times $t.$ As a consequence, we can focus on the 
first $n-1$ components $\tilde{z}(t)$ of $z(t)$ which, after some calculations, 
can be shown to satisfy
\begin{equation*}
\begin{aligned}
  \tilde{z}(t) &=( \tilde{A} - A_{(1:n-1,n)} \mathbf{1}_{n-1}^* ) \tilde{z}(t) 
+ \sum_{k=1}^m ( \tilde{N}_k - N_{k,(1:n-1,n)} \mathbf{1}_{n-1}^* ) 
\tilde{z}(t)u_k(t) + \tilde{B}u(t), \\
y(t) &= \tilde{C}\tilde{z}(t) + Cx_e, \quad \tilde{z}(0) = 0.
\end{aligned}
\end{equation*}
Typically, the matrices $A$ and $N_k$ result from finite difference or finite 
element discretization, respectively, and thus are sparse. The previous 
projection in fact only slightly increases the number of nonzero entries. 
Moreover, the matrices are given as the sum of the original data and a low rank 
update which can be exploited in a numerical implementation as well.

\subsection{Discounting the system state}
\label{sec:discount}
An {\em ad-hoc} alternative to the decomposition of the state space into stable 
and unstable directions is the ``shifting'' of the $A$-matrix by a translation 
$A\mapsto A-\alpha I$ for some  $\alpha>0$. If $A$ has a simple eigenvalue zero, 
as in our case, there exists an $\alpha>0$, such that the matrix $A-\alpha I$ is 
Hurwitz. 
For linear systems the shifting can be interpreted as a discounting of the 
controllability and observability functionals that renders the associated 
Gramians finite \cite{Nichols2011}. 

As the controllability and observability Gramians in the bilinear case are 
lacking a similar interpretation, the shifting has no clear functional analogue 
(cf.~\cite{Benner2011a}). 
It is still possible to stabilize the system by a joint state-observable 
transformation
\[
(x,y)\mapsto (e^{-\alpha t}x,e^{-\alpha t}y)=:(\tilde{x},\tilde{y})\]
under which the system (\ref{bilin1})--(\ref{bilin2}) transforms according to  
\begin{equation}\label{bilinShifted}
\begin{aligned}
  \frac{\dd \tilde{x}}{\dd t} & = \left(A - \alpha I\right) \tilde{x} + 
\sum_{k=1}^{m} \left(N_k \tilde{x}  + 
b_{k}\right)u_{k}\,,\quad \tilde{x}(0)=x_{0}\\
\tilde{y} & = C\tilde{x}\,.
\end{aligned}
\end{equation}
Even though (\ref{bilinShifted}) and (\ref{bilin1})--(\ref{bilin2}) are 
equivalent as state space systems, the shifting clearly affects the Hankel 
singular value spectrum and, as a consequence, the reduced system. (As a matter 
of fact, the Hankel singular values do not even exist in case of the 
untransformed system.) Hence the parameter $\alpha$ should be regarded as a 
regularization parameters that must chosen as small as possible. 

Later on we compare stabilization of the $A$ matrix by state space decomposition 
and shifting in terms of the achievable state space reduction (i.e., decay of 
Hankel singular values) and fidelity of the reduced models.

\subsection{Scaling the control fields}
\label{sec:scaling}

Assumption 1 in Sec.~\ref{sec:assumptions} deals with the existence and 
uniqueness of controllability and observability Gramians which are obtained as 
solutions to the generalized Lyapunov equations.
The criterion given there involves an upper bound for the matrix 2-norm of the 
control matrices $N_k$.
In the examples of model order reduction shown below, this can be achieved by a 
 
suitable scaling $u \mapsto \eta u, N_k\mapsto N_k/\eta, B \mapsto B/\eta$ with 
real $\eta>1$ which leaves the equations of motion invariant but, clearly, not 
the Gramians. Hence, by increasing $\eta$, we drive the system to its linear 
counterpart. For the limit $\eta\rightarrow\infty$, the system matrices $N$ and 
$B$ vanish and we obtain a linear system. For this reason, $\eta$ should not be 
chosen too large. 

\subsection{Calculation of the $\mathcal{H}_2$ error}
\label{sec:h2error} 
To quantify the error introduced by dimension reduction, we use the 
$\mathcal{H}_2$-norm introduced in Sec.~\ref{sec:h2}. We emphasize that the 
effort required 
for computing the $\mathcal{H}_2$-error is negligible when compared to solving 
the generalized Lyapunov equations arising for balanced truncation and singular 
perturbation, respectively, which is seen as follows. Given a reduced-order 
system $\hat{\Sigma},$ the associated $\mathcal{H}_2$-error is given as
\begin{equation}
\label{eq:H2error}
 \|\Sigma -\hat{\Sigma}\|_{\mathcal{H}_2}^2 = \mathrm{tr}(C_e P_e C_e^*),
\end{equation}
where $P_e$ solves \eqref{eq:err_genlyap}. Using the particular structure of 
the error system, this is obviously the same as
\begin{equation*}
 \|\Sigma -\hat{\Sigma}\|_{\mathcal{H}_2}^2 = \mathrm{tr}(CPC^*) - 
2\mathrm{tr}(CX\hat{C}^*) + \mathrm{tr}(\hat{C}\hat{P}\hat{C}^*).
\end{equation*}
However, the term $\mathrm{tr}(CPC^*)$ now can be precomputed since $P$ is 
required for the balancing-based methods anyway. What remains is the 
computation of the solutions $X$ and $\hat{P}$ of the following the generalized 
Sylvester and Lyapunov equations, respectively
\begin{align*}
  AX + X\hat{A}^* + \sum_{k=1}^m N_k X \hat{N}_k^* + B\hat{B}^* &=0,\\
  \hat{A}\hat{P} + \hat{P}\hat{A}^* + \sum_{k=1}^m \hat{N}_k \hat{P} 
\hat{N}_k^* + \hat{B}\hat{B}^* &=0.
\end{align*}
Based on the results from \cite{Dam08}, we can compute $X=\lim_{i\to 
\infty}X_i$ and $\hat{P}=\lim_{i\to \infty} \hat{P}_i$ as the limits of 
solutions to standard Sylvester and Lyapunov equations 
\begin{align*}
AX_1 + X_1 \hat{A}^* + B\hat{B}^* &=0, \\
  AX_i + X_i\hat{A}^* + \sum_{k=1}^m N_k X_{i-1} \hat{N}_k^* + B \hat{B}^* &=0, 
\quad i\ge 2, \\
\hat{A}\hat{P}_1 + \hat{P}_1 \hat{A}^* + \hat{B}\hat{B}^* &=0, \\
  \hat{A}\hat{P}_i + \hat{P}_i\hat{A}^* + \sum_{k=1}^m \hat{N}_k \hat{P}_{i-1} 
\hat{N}_k^* + \hat{B} \hat{B}^* &=0, 
\quad i\ge 2.
\end{align*}

\subsection{Software}

All of the numerical tests of the dynamical systems presented in the following 
have been carried out using the  \textsc{WavePacket} software project which 
encompasses all numerical methods for model order reduction as discussed above.
Being hosted at the open--source platform Sourceforge.net, this program package 
is publicly available, along with many instructions and demonstration examples, 
see \texttt{http://sf.net/projects/wavepacket} and 
Refs.~\cite{BSchmidt:75,BSchmidt:78}.
In addition to a mature \matlab version, there is also a C++ version currently 
under development.

\section{Fokker--Planck equation}
\label{sec:fpe}

We start off with an example from stochastic control in classical mechanics: a 
semi-discretized Fokker--Planck equation (FPE) with external forcing. To this 
end, we consider the stochastic differential equation
\begin{equation}\label{SDE}
\dd X_t = \left(u_t - \nabla V(X_t)\right)\dd t + \sigma \dd W_t\,, \quad 
X_0=x\,,
\end{equation}
that governs the motion of a classical particle with position $X_t\in\R^n$ at 
time $t>0$. The motion is influenced by the gradient of a smooth potential $V$, 
a deterministic control force $u$ and a random forcing coming from the 
increments of the Brownian motion $(W_t)_{t\ge 0}$ in $\R^n$. For simplicity we 
assume that the potential $V$ is $C^\infty$, with 
\[
V(x)\sim |x|^{2k}\quad\textrm{as}\quad |x|\to\infty.
\]
Note that $X_t=X_t(\omega)$ is a random variable for every $t>0$, and an 
equivalent characterization of the diffusion process $X_t$ is in terms of its 
probability distribution
\[
\int_A \rho(y,t)\,\dd y = \textrm{Prob}[X_t \in A \,|\, X_0=x]
\]
where $A\subset\R^n$ is any measurable (Borel) subset of $\R^n$, and  
$\rho\colon\R^n\times\R_+\to\R_+$ is the associated probability density whose 
time evolution is governed by the Fokker--Planck equation
\begin{equation}\label{fokker}
\frac{\partial \rho}{\partial t} = \nabla\cdot\left(\beta^{-1}\nabla \rho + 
\rho(\nabla V - u)\right)\,,\quad  \lim_{t\searrow 0}\rho(\cdot,t) = \delta_x\,,
\end{equation} 
with the shorthand $\beta=2/\sigma^2$ for the inverse temperature. 
The limit in the last equation, that must be understood in the sense of weak 
convergence of probability measures (or, equivalently, weak-$*$ convergence), 
reflects our choice of deterministic initial condition $X_0=x$; the 
regularization property of the parabolic FPE guarantees that $\rho(\cdot,t)$ is 
$C^2$ for any $t>0$; moreover the solution stays non-negative. 
Later on, we will consider the case that the initial conditions are drawn from 
a 
probability density $\rho_0$ and thus replace $\delta_x$ by $\rho_0$. 

Note that by the divergence theorem, 
\begin{equation}\label{L1cons}
\frac{\dd}{\dd t}\int \rho(y,t)\,\dd y = 0\,,
\end{equation}
hence the total probability is conserved along the solution of (\ref{fokker}).

When $u=u_0$ is constant, the properties of the potential $V$ entail that the 
solution to the FPE converges exponentially fast to a stationary solution 
$\rho_\infty$ as $t\to\infty$ (see, e.g., \cite{lelievre2016}). The stationary 
solution is then given as the unique normalized solution to the elliptic 
partial 
differential equation
\begin{equation}\label{canFPE}
0= \nabla\cdot\left(\beta^{-1}\nabla \rho + \rho\nabla V_u \right)
\end{equation}
and has the form 
\begin{equation}\label{canDens}
\mu(x) = \frac{1}{Z_u}e^{-\beta V_u(x)}\,,\quad Z_u = \int_{\R^n}e^{-\beta 
V_u(x)}\,\dd x\,,
\end{equation}
where we have introduced the shorthand $V_u(x)=V(x)-u_0\cdot x$ for the tilted 
potential.

Later on we will study the convergence towards the stationary distribution that 
is exponential with a rate essentially given by the first non-zero eigenvalue 
$-\lambda_1>0$, and compare the fully discretized model with its reduced-order 
approximant.

\subsection{Metastable model system}
\label{sec:fpe_model}

We consider the situation of a diffusive particle in $\R^2$ that is confined by 
the following periodically perturbed quadruple-well potential\footnote{Eric 
Barth, private communication.} shown in 
Figure~\ref{fig:fpe_potential}
\begin{equation}\label{poten}
\begin{aligned}
V =\; & 0.01\left( (x_1-0.1)^4 - 20x_1^2  + (x_2+0.4)^4 - 20 x_2^2\right.\\  
& + \left.10\sin(5x_1)\cos(5x_2) + x_1 x_2 + 290.4 \right)
  \end{aligned}
 \end{equation}
The potential has a deep energy well in the south-east of the 
$x_1$-$x_2$-plane, one slightly shallower well in the south-west and two even 
shallower wells in the north-west and north-east.
The system is metastable, in that the time scale to reach the deepest potential 
energy well from any of the other three wells is of the order of the Arrhenius 
timescale $e^{\beta\Delta V_{\rm min}}\gg 1$ where $\Delta V_{\rm min}$ denotes 
the minimum energy barrier that a particle going from one well to the 
south-east 
well would have to overcome \cite{Berglund2013}. The various local minima of 
the 
 potential energy surface that originate from the periodic perturbation do not 
have any significant effect on the transition rates between the main wells. 
The corresponding stationary density $\mu$ is shown in the upper left panel of 
Fig.~\ref{fig:fpe_cluster1}. For moderate temperature ($\beta=4.0$) essentially 
only the two main wells are populated, with considerably more weight on the 
deepest minimum (SE).

\subsection{Finite difference discretization} 
\label{sec:fpe_discrete}

Sine all coefficients in the FPE (\ref{fokker}) are sufficiently smooth, we can 
discretize it using finite differences. Let $\Omega = (a, b)\times (c, d)$, and 
consider the solution domain $D=\bar{\Omega}\times [0,T]\subset\R^2\times\R_+$. 
On a bounded domain, probability conservation (\ref{L1cons}) requires that the 
outwards probability flux 
\begin{equation*}
J_u(\rho) = \beta^{-1}\nabla\rho + \rho(\nabla V - u)
\end{equation*}
across the boundary of the spatial domain is zero at any time. Letting $\nu$ 
denote the outward pointing normal to $\partial\Omega$, the FPE (\ref{fokker}) 
on $D$ reads  
\begin{equation} \label{fokker2}
\begin{aligned}
\frac{\partial \rho}{\partial t} = \nabla\cdot\left(\beta^{-1}\nabla \rho + 
\rho(\nabla V - u)\right)\,, & \quad  (x,t) \in \Omega \times (0,T]\\
0  = \nu\cdot J_u(\rho)\,, & \quad  (x,t)\in \partial\Omega \times [0,T]\\
\rho_{0} = \rho\,, & \quad (x,t)\in\Omega\times \{0\}\,.
\end{aligned}
\end{equation}
For simplicity we will discretize the equation on the uniform mesh 
\[
\begin{aligned}
  \Omega_h & := \{(a+ih_1,c+jh_2)\;\colon 1<i<n_1-1, 1<j<n_2-1\},\\
  \partial\Omega_h & := \{(a+ih_1,c+jh_2)\;\colon 0\leq i \leq n_1, 0\leq j\leq 
n_2\}\setminus  \Omega_h,
\end{aligned}
\]
where $h_1=(b-a)/(n_1+1)$ and $h_2=(d-c)/(n_2+1)$ are the mesh sizes in $x_1$ 
and $x_2$ direction. Letting $w_{i,j}=\rho(x_{1,i},x_{2,j})$ with 
$(x_{1,i},x_{2,j})\in\Omega$, we approximate the first and second derivatives 
in 
the usual way by centered finite differences, e.g.
\begin{equation}
\begin{aligned}
\left.\frac{\partial\rho}{\partial x_1}\right|_{x=(x_{1,i},x_{2,j})} & \approx  
\frac{w_{i+1,j}-w_{i-1,j}}{2h_1}\\
\left.\frac{\partial^2\rho}{\partial x_1^2}\right|_{x=(x_{1,i},x_{2,j})} 
& \approx \frac{w_{i+1,j}-2w_{i,j} + w_{i-1,j}}{h_1^2}\,.
\end{aligned}
\end{equation}

For sufficiently small mesh size $h=(h_1,h_2)$, the finite difference 
discretization is known to preserve positivity, norm and stochastic stability. 
As a consequence, the stationary distribution of the discretized equation is 
the 
unique asymptotically stable fixed point and approximately equal to the 
stationary solution $\mu$ of the original equation, evaluated at the grid 
points; cf.~\cite{latorre2011}. 

In matrix-vector notation, the discretization of (\ref{fokker2}) can be 
compactly written as
\begin{equation}\label{discFPE}
	\dot{v} = Av + \sum_{k=1}^{2}u_k N_k v \,,\quad v(0)=v_{0}\,
\end{equation}
where $v\in\R^{n}$ with $n=n_1 n_2$ is the column-wise tensorization of 
$(w_{i,j})_{i,j}$, i.e. $v_{i + (j-1)n_1} = w_{i,j}$, $A\in\R^{n\times n}$ is 
the discretization of the Fokker--Planck operator 
\[
 \nabla\cdot\left(\beta^{-1}\nabla \rho + \rho\nabla V \right) = 
\beta^{-1}\Delta\rho + \nabla V \cdot\nabla\rho + (\Delta V)\rho
\]
of the uncontrolled dynamics, and the $N_i$ are the discretization of the 
partial derivatives $\partial/\partial x_i$ on the tensorized grid, $u_1$ and 
$u_2$ are the components of $u$. 

By construction, $-A$ is an $M$-matrix with a simple eigenvalue $0$ that 
corresponds to the discretized unique stationary distribution 
$\pi\approx\mu|_{\Omega_h}$, all other eigenvalues have strictly negative real 
parts. This is in contrast to the spectral properties of the original operator 
that is symmetric (essentially self-adjoint) when considered on the 
appropriately weighted Hilbert space, i.e., all its eigenvalues are real. We 
observe, however, that the dominant eigenvalues are real when the 
discretization 
is sufficiently fine. 

Tab.~\ref{tab:fpe_spectrumA} gives the 12 smallest eigenvalues (by their 
magnitude) of the matrix $A$ and  for a discretization of the domain 
$\Omega=(-6.0, 6.0)\times (-5.5, 6.5)$ with uniform mesh size $h_1=h_2=0.25$; 
the size of the resulting matrix $A$ is $2401\times 2401$. 
The $L^1$-deviation between the eigenvector $\pi$ to the eigenvalue 
$\lambda_0=0$ and $\mu$ evaluated at the grid points is smaller than $0.007$. 
As 
the theory predicts, the matrix has 4 dominant eigenvalues close to $0$ 
(including $\lambda_0=0$) that are separated from the rest of the spectrum. 
Figure~\ref{fig:fpe_cluster1} shows the 4 dominant eigenvectors of $A$, the 
first one being the stationary distribution that is essentially supported by 
the 
two deepest minima, the second one describing the dominant transition process 
between the deepest and the second deepest minimum, the third one representing 
the transitions between the second and the third deepest minimum and so on. 
The absolute values of the corresponding eigenvalues 
$\lambda_1,\,\lambda_2,\,\lambda_3<0$ represent (up to an error of order 
$\sqrt{\eps}$) the transition rates between the dominant potential energy 
wells. 
The fact that the subdominant eigenvalues appear in clusters of 4  has to do 
with the approximate four-fold symmetry of the potential. 
By tilting the potential towards one or several of the minima (thus flattening 
some of the other minima) the number of eigenvalues in the dominant cluster 
changes according to the number of resulting wells.

\subsection{Stable input-output system in standard form}

We first augment (\ref{discFPE}) by an output equation. To this end we 
introduce 
the observable $y=(y_1,\ldots,y_4) \geq 0$ denoting the probability for each of 
the four energy wells. The $y_i$ are given by summation of the density $v$ over 
all mesh points corresponding to the four quadrants of the $x_1$-$x_2$-plane, 
which, using the tensorized form of the equation, can be written as 
\begin{equation}\label{obsFPE}
y = Cx
\end{equation} 
for a matrix $C\in \R^{4\times n}$. The discretized FPE is bilinear, but it is 
homogeneous, i.e., it does not contain a purely linear term ``$Bu$'', which 
implies that no state is reachable from the origin $v(0)=0$.\footnote{Note that 
$v(0)=0$ is not a probability density, hence not an admissible starting point 
from a probabilistic point of view.} To transform (\ref{discFPE}) into the 
standard form (\ref{bilin1}), we follow the procedure described in Sec. 
\ref{sec:project}.

\subsection{Numerical results}
\label{sec:fpe_results}

Here and throughout the following we will use the following short-hand notation 
when comparing results for the three approaches to model order reduction:
BT stands for balanced truncation, as given by equation (\ref{eq:bt}) in 
Sec.~\ref{sec:bt} whereas SP symbolizes the averaging principle derived from 
singular perturbation theory, as given by equations 
(\ref{eq:avg})--(\ref{eq:avgCoeff}) in Sec.~\ref{sec:avg}. 
Finally, H2 is the $\mathcal{H}_2$-optimal model order reduction of 
Sec.~\ref{sec:h2}.

The details of the following comparisons depend sensitively on the value of the 
parameter $\eta$ used for scaling of the control field $u(t)$ and matrices 
$N_k$ 
and $B$, which is necessary to guarantee existence and uniqueness of 
controllability and observability Gramians, see Sec.~\ref{sec:scaling}.
For the particular example of the FPE dynamics for inverse temperature $\beta=4$ 
investigated here, we use a value of $\eta=10$ consistently for all three 
approaches to model order reduction.
Moreover, to stabilize the $A$ matrix we use here the projection method from 
Sec.~\ref{sec:project}. 
However, our results are practically unchanged when using the discounting 
approach described in Sec.~\ref{sec:discount} instead, assuming that the 
regularization parameter $\alpha$ is within a reasonable range.

The behavior of the $\mathcal{H}_2$-error defined in (\ref{eq:H2error}) for the 
discretized FPE is shown in Fig.~\ref{fig:fpe_h2error}. 
Similarly for all of the three methods, this error displays a plateau value of 
approximately $10^{-5}$ for a reduced dimensionality of about $d \gtrsim 60$. 
Upon further reduction of the dimensionality we observe a rapid increase over 
several orders of magnitude indicating a decreased quality when reducing overly.
In most cases it is found that the $\mathcal{H}_2$-error for the H2 method is 
slightly lower than for BT, which in turn is slightly lower than for the SP 
method.

While the $\mathcal{H}_2$-error characterizes the error of model order 
reduction 
for the limiting case of an infinitely short pulse (Dirac-like) control field, 
it may be also of interest to compare full versus reduced order models for more 
realistically shaped control fields.
As an example we consider here the Fokker--Planck dynamics, again for $\beta=4$, 
induced by a Gaussian-shaped control pulse along the $x_2$-direction
\begin{equation}
\label{eq:gauss_control}
	u_2(t) = a \exp \left( -\frac{(t-t_0)^2}{2\sigma^2} \right)
\end{equation}
centered at $t_0=150$. 
Here $\sigma=\tau/\sqrt{8\log2}$ 
is chosen to yield a full width at half maximum of $\tau=100$ 
which is on the same order of magnitude as the relaxation time to equally 
account for the aspects of controllability and observability.
The time evolution of the four above-mentioned observables (populations of the 
quadrants of the $x_1$-$x_2$ plane) is shown in Fig. \ref{fig:fpe_populations}.
The amplitude $a=0.5$ of the pulse has been determined to drive approximately 
one half of the density from the lower minima (south) to the higher minima 
(north) at $t\approx 200$. 
At later times, the populations return exponentially to their original values 
defined by the canonical density of Eq. (\ref{canDens}).

Our numerical experiments show that the population dynamics for $d=100$ is 
still 
practically indistinguishable from calculations in full dimensionality. When 
further reducing the model order down to $d=50$ and $d=30$, we observe that the 
quality of the SP method is superior to the BT or H2 method.
However, despite of some minor differences, the overall performance of all 
three 
model order reduction schemes is impressive when considering that the original 
dimension of the problem is $n=2401$. We observe that the $\mathcal{H}_2$ error 
occasionally drops below machine precision. These occurrences appear at random 
and are not reproducible (depending e.g.~on the computer used for the numerical 
calculation) and therefore we attribute them to numerical artifacts and exclude 
the values in the corresponding plots. 

We emphasize that replacing the reduced-order model by a coarse 
finite-difference discretization of the advection-dominated Fokker-Planck 
equation is not advisable. For example, using a mesh size $h_1=h_2=1.25$, and 
thus  11 grid points per dimension, corresponding to a system of dimension 
$d=121$, we find that the error in the stationary distribution (i.e.~the 
eigenvector to the eigenvalue $\lambda_0=0$) is of order 1 and that none of the 
dominant eigenvalues is approximated. For even larger mesh size, the 
eigenvalues 
of the matrix $A$ cross the imaginary axis, resulting in an unstable system. 
Hence the recommended reduction strategy consists in \emph{first} generating a 
sufficiently fine discretization of the original system and \emph{then} 
reducing 
the dimension. 


\section{Liouville--von Neumann equation}
\label{sec:lvne}

As a second example we choose the dynamics of open $q$--state quantum systems.
Usually those are formulated in terms of a matrix representation of the reduced 
density operator, $\rho \in \C^{q\times q}$,  the diagonal and off-diagonal 
entries of which stand for populations and coherences, respectively. 
The time--evolution of $\rho$ is governed by a quantum master equation which, 
due to a formal similarity with the Liouville equation in classical mechanics, 
is termed Liouville--von Neumann (LvNE) equation \cite{weiss1999,breuer2002}
\begin{equation}
  \label{eq:lvne} 
  i \frac{\partial}{\partial t} \rho(t) =  
  {\mathcal L}_H\rho(t) + {\mathcal L}_D\rho(t)\,,
\end{equation}
where we have used atomic units ($\hbar=1$).
The first Liouvillian on the right hand side represents the closed system 
quantum dynamics 
\begin{equation}
  \label{eq:lvne1} {\mathcal L}_H\rho(t)=-i\left[H_0 - \sum_k 
F_k(t)\mu_k,\rho(t)\right]_-
\end{equation}
where $[\cdot,\cdot]_-$ stands for a commutator and where the field--free system 
is expressed in terms of its Hamiltonian matrix $H_0$.
The system can be controlled through the interaction of its dipole moment 
matrices $\mu_k$ with electric field components $F_k(t)$ which is the 
lowest--order semiclassical expression for the interaction of a quantum system 
with an electromagnetic field.
The second Liouvillian on the right hand side of (\ref{eq:lvne}) represents the 
interaction of the system with its environment thus accounting for 
time-irreversibility, i.e., dissipation and/or dephasing. 
A commonly used model for these processes is the Lindblad form 
\cite{lindblad1976} 
\begin{equation}
  \label{eq:lvne2} 
{\mathcal L}_D\rho = i \sum_c \left(C_c\rho C_c^\dagger- 
\frac{1}{2}\left[C_c^\dagger C_c,\rho\right]_+\right) 
\,, 
\end{equation}
where the index $c$ runs over all dissipation channels \cite{breuer1997} and 
where $[\cdot,\cdot]_+$ stands for an anti--commutator.
The Lindblad operators $C_c$ describe the coupling to the environment in 
Born-Markov approximation (weak coupling, no memory), typically chosen to be 
projectors
\begin{equation}
C_c=C_{i\leftarrow j}=\sqrt{\Gamma_{i\leftarrow j}}\,|i\rangle\langle j|
\label{eq:lindblad1}
\end{equation}
with rate constants (inverse times) $\Gamma_{i\leftarrow j}$. 

In order to cast the evolution equation (\ref{eq:lvne}) into the standard form 
of bilinear input-output systems (\ref{eq:pur_bil_shifted}) for deviations from 
the stationary solution, the density matrix $\rho$ has to be mapped onto a 
vector $x$ with $n=q^2$ components.
Choosing the vectorization such that populations go in front of coherences 
offers the advantage that $A$ is blockdiagonal with block sizes $q$ and $(n-q)$ 
where the latter block is diagonal. 
Moreover, the upper left submatrix of $N$ is a zero matrix of size $q\times q$.
We note that typically both $A$ and $N$ are sparse matrices whereas $B$ and $C$ 
are not.
For more details of the vectorization procedure and the associated construction 
of matrices $A$, $N$, $B$, and $C$ from the LvNE, see Appendix A of 
Ref.~\cite{Boris2011}.

With the Lindblad model introduced above, the LvNE (\ref{eq:lvne}) is 
trace-preserving (i.e., the sum of populations remains constant) and completely 
positive (i.e., the individual populations remain positive) thus ensuring 
the probabilistic interpretation of densities in quantum mechanics. Despite of 
the different discretization schemes used, the model bears many similarities 
with the discretized Fokker--Planck equation considered in Sec.~\ref{sec:fpe}, 
including the simple zero eigenvalue of the matrix $A$. 

\subsection{Double well model system}
\label{sec:lvne_model}

We apply our model reduction approaches to dissipative quantum dynamics 
described by a (one--dimensional) asymmetric double well potential as presented 
in our previous work \cite[Figure 1]{Boris2011}.
Our parameters are chosen such that there are six (five) stationary quantum 
states which are essentially localized in the left (right) well.
We also include the first ten eigenstates above the barrier separating the wells 
which are delocalized while even higher states are not considered for 
simplicity. 
In total, the $q=21$ considered states lead to a density matrix with dimension 
$n=441$. 
Thus, model order reduction can be mandatory, e.g., during a refinement of 
fields in optimal control. 

In the present model simulations, the dependence of rate constants $\Gamma_{i 
\leftarrow j}$ with $j>i$ describing the decay of populations (and associated 
decoherence) are obtained from the model of Ref. \cite{andrianov2006a} which 
employs only one adjustable parameter which we choose as 
$\Gamma\equiv\Gamma_{0\leftarrow 2}$; the rates for upward transitions ($i>j$) 
are calculated from those for downward ones using the principle of detailed 
balance
\begin{equation}
\Gamma_{j\leftarrow i} = \exp \left( - \frac{E_j-E_i}{\Theta} \right) 
\Gamma_{i\leftarrow j},\quad j>i
\end{equation}
where $E$ are the eigenvalues of the unperturbed Hamiltonian $H_0$. Hence, the 
temperature $\Theta$ is the second parameter needed to set up matrix $A$ 
(assuming Boltzmann constant $k_B=1)$.
The external control of the quantum system is modeled within the semi-classical 
approximation of Eq.~(\ref{eq:lvne1}): The electric field $F(t)$ interacts 
linearly with the dipole moment $\mu$ which is assumed to be proportionate to 
the system coordinate of the double well system which is used to set up 
matrices 
$N$ and $B$ describing the controllability.
To observe the system dynamics, we monitor the sums of the populations of the 
quantum states localized in the left and right well, and of the delocalized 
states over the barrier.
These three quantities are used to construct the matrix $C$ describing the 
observability \cite{Boris2011}.

\subsection{Numerical results}
\label{sec:lvne_results}

As was already noted for the FPE example, the performance of the model order 
reduction schemes depends sensitively on the value of the parameter $\eta$ used 
for scaling of the control field $u(t)$ and matrices $N_k$ and $B$,
see Sec.~\ref{sec:scaling}.
For all examples from LvNE dynamics discussed here, we use a value of $\eta=3$.
In addition, the $A$ matrices are stabilized using the projection method 
introduced in Sec.~\ref{sec:project}. 
Again, all results are practically unchanged when using the discounting approach 
described in Sec.~\ref{sec:discount} instead.

We begin our discussion by considering the spectrum of the $A$-matrix as 
displayed in Fig.~\ref{fig:lvne_spectrumA}.
With increasing dimension reduction, more and more of the eigenvalues with 
lowest (most negative) real parts are eliminated first. 
As has been detailed in Appendix A of Ref. \cite{Boris2011}, those correspond 
to 
quantum states which decay fastest.
Hence, the eigenvalues of $A$ with lowest real part are associated with lowest 
observability. At the same time, the order reduction tends to eliminate states 
with large imaginary part first. 
Those correspond to coherences between quantum states with large energy gaps 
for 
which the Franck-Condon (FC) factors are typically very low.
Hence, the eigenvalues of $A$ with largest imaginary part are associated with 
lowest controllability.
A noteworthy exception are the results for $d=30$ (green dots in 
Fig.~\ref{fig:lvne_spectrumA}) with real parts near zero.
There, the imaginary parts (energy differences) near even multiples of $\approx 
0.1$ can be assigned to ladder climbing within each of the wells of the double 
well potential, while odd multiples correspond to transitions between the 
minima.
Because the FC factors for the former ones are larger, they are more likely to 
be preserved in dimension reduction due to their higher controllability.
This is seen most clearly in the left panel of Fig.~\ref{fig:lvne_spectrumA}, 
i.e., for the BT method.
In summary, the model order reduction confines the spectrum of $A$ to the lower 
(most controllable) and to the right (most observable) part of the complex 
number plane. 
In general, the results of the three different approaches (BT, SP, and H2 
method) are very similar to each other.

To quantify the error introduced by model order reduction of the LvNE system, 
we 
consider the behavior of the $\mathcal{H}_2$-error as defined in 
(\ref{eq:H2error}). Our results for various values of the relaxation rate 
$\Gamma$ (but constant temperature, $\Theta=0.1$) are shown in the left half of 
Fig. \ref{fig:lvne_h2error}. The higher the value of the relaxation rate 
$\Gamma$, the smaller is the ${\mathcal H}_2$ error and the earlier the error 
reaches a plateau at about $10^{-10}\ldots 10^{-9}$. 
Hence, dimension reduction is more effective for open quantum systems with 
larger rate constants for relaxation (and associated decoherence). Furthermore, 
it is noted that the BT and the H2 method yield similar ${\mathcal H}_2$ errors 
at comparable computational effort so that there is no clear preference for 
either one of them.

Our results for various values of the temperature $\Theta$ (but constant 
relaxation, $\Gamma$=0.1) are shown in the right half of Fig. 
\ref{fig:lvne_h2error}. For low ($\Theta=0.07$) and for medium ($\Theta=0.1$) 
temperatures, the ${\mathcal H}_2$ error decreases with increasing 
dimensionality $r$ and again reaches a plateau. However, at higher temperature 
($\Theta=0.2$) the error decreases rapidly and reaches machine precision at 
$r\approx 100$. 

Again, in most cases the results for the different methods are close to each 
other, with the only exception being the lower temperature ($\Theta=0.07$), 
where the $\mathcal{H}_2$-error for the BT method is often found below that for 
the H2 method. At low temperature the system becomes less controllable, and 
this 
suggests that H2 does not always correctly capture the controllable 
states---which BT does by construction.
As before in the Fokker--Planck example, we observe that the $\mathcal{H}_2$ 
error occasionally drops below machine precision. As these occurrences appear 
at 
random and are not reproducible (depending e.g.~on the computer used for the 
numerical calculation), we attribute them to numerical artifacts and exclude 
the values in the corresponding plots. 

Finally, an example for the time evolution of the three above-mentioned 
observables (populations) in the asymmetric double well system (relaxation rate 
$\Gamma=0.1$ and temperature $\Theta=0.1$) is investigated for the control 
field 
given in Eq.~(\ref{eq:gauss_control}), here with $a=3$, $t_0=15$, and $\tau=10$.
The pulse drives the population, which is initially mainly in the left well of 
the potential, to delocalized quantum states over the barrier from where 
transitions to the right well are induced.
The subsequent relaxation to the thermal distribution proceeds on a much longer 
time scale not shown here.
In Fig.~\ref{fig:lvne_populations} we compare the results for full 
dimensionality ($n=441$) with reduced dimensionality $d$. 
While the results for $d=100$ are still essentially exact, the results for 
$d=50$ start to deviate notably. For $d=30$ only the BT method (left panel of 
Fig~\ref{fig:lvne_populations}) reproduces the full dimensional ones 
qualitatively while SP method (center panel) as well as H2 method (right panel) 
fail completely. 

\section{Conclusions}\label{sec:conclusion}

In this paper, model reduction methods for bilinear control systems are 
compared, with a special focus on Fokker--Planck and Liouville--von Neumann 
equation. The methods can be categorized into balancing based (balanced 
truncation, singular perturbation) and interpolation based ($\mathcal{H}_2$ 
optimization) reduction methods. While these methods have already been 
discussed in \cite{albaiyat1993,Boris2011,benner2011,Benner2012,flagg2012}, 
our focus is on a direct and thorough comparison between all of them. 
Particularly, we draw the following conclusions 
with regard to 
computational complexity, accuracy and applicability to realistic bilinear 
dynamics. 
 
\subsection{Computational complexity}

The computational effort of BT and SP is essentially determined by the 
solution of the two generalized Lyapunov equations \eqref{P} and \eqref{Q}. 
From a theoretical point of view,  
the complexity for solving these equations explicitly is  
$\mathcal{O}(n^6).$ On the other hand, an iterative approximation 
(\cite{Dam08}) as described in Sec. \ref{sec:h2error} with $r$ iteration steps 
only requires $\mathcal{O}(r n^3)$ operations (due to solving the standard 
Lyapunov equations in each step by a direct solver such as the Bartels-Stewart 
algorithm by \texttt{lyap} in \matlab). As an alternative, the generalized 
equations can be rewritten as a linear  
problem which can be solved, e.~g., by the bi--conjugate gradient method where 
it is advantageous to use the solutions of the corresponding ordinary equations 
for pre-conditioning.

The effort of H2 is mainly due 
to the solution of two generalized Sylvester equations in each step of the 
bilinear iterative rational Krylov algorithm (BIRKA). In contrast to BT/SP, a 
direct solution of these equations requires ``only'' $\mathcal{O}(l^3 n^3)$ 
operations ($l$ denoting the dimension of the reduced model). Similarly, the 
cost for an iterative procedure is less since the standard Sylvester equations 
can be handled efficiently for sparse system matrices. Hence, a single step of 
BIRKA is computationally less expensive than performing the balancing step 
in BT/SP. However, the overall cost for BIRKA obviously depends on the number 
of iteration steps that is needed until the fixed point iteration is 
(numerically) converged, see Sec. \ref{sec:h2}. Based on the numerical examples 
studied here, we can not report significant differences between all three 
methods.

\subsection{Accuracy of reduced models}

The overall performance of all three methods is very satisfactory. Both 
transient responses as well as spectral properties of the original model are 
faithfully reproduced by all reduced models (see 
Figs.~\ref{fig:fpe_h2error}--\ref{fig:fpe_populations} and 
\ref{fig:lvne_h2error}--\ref{fig:lvne_populations}. Despite the nature of H2, a 
significant difference of the quality (w.r.t. the $\mathcal{H}_2$-norm)  of the 
reduced models cannot be observed. Also, the (moderate) additional effort for 
SP instead of BT does not seem to lead to more accurate reduced models.

\subsection{Unstable bilinear dynamics and scaling}

Both BT/SP and H2 require the dynamics of the unperturbed system 
to be stable. 
The spectrum of the matrix $A$ representing the field-free FPE / LvNE dynamics 
is in the left half of the complex number plane, however, with an additional 
single eigenvalue zero. The effects of two different 
stabilization techniques, i.e. a shift of the spectrum of $A$ versus a 
splitting 
of stable and unstable parts leads to similarly accurate results (see 
Secs.~\ref{sec:project} and \ref{sec:discount}). The latter 
approach however has the benefit that the bilinear dynamics are not changed by 
projecting onto the asymptotically stable part. 

For the generalized Lyapunov and/or Sylvester equations to be solvable, the 
norms of the matrices $B$ and $N_k$ have to be kept below certain thresholds 
which is achieved by down-scaling these matrices and corresponding up-scaling 
of 
the control fields, cf.~Sec. \ref{sec:scaling}. This leaves the equations of 
motion invariant (but not the Gramians). Here we observe significantly 
different results depending on the choice/size of the scaling factor. In some 
cases, good results are obtained only for large scaling factors. However, we 
emphasize that large scaling factors drive the Gramians to those appearing for 
the linear(ized) system. For this reason, an automatic (large) choice of these 
factors is not recommended but has to be investigated for the problem under 
consideration on a case by case basis. From the numerical example, we believe 
that the scaling is a very important point for obtaining ``optimal'' reduced 
models.

\subsection{Further issues}
Another aspect related to the computation of the balancing transformation that 
we mention only for the sake of completeness is that it is often  advisable to 
exploit sparsity and to use low-rank techniques that do not require to compute 
the full Gramians and their Cholesky factorization, one such example being the 
low-rank Cholesky factor ADI method  \cite{Li2004,Benner2013}. These methods 
require some fine tuning of the parameters to enforce convergence, but for 
example, in case of the Fokker--Planck equation for which the matrices $A$ and 
$N$ that are extremely sparse and the rank of the matrix $-BB^T$ is much 
smaller 
than the size of the matrices $A,N$, there can be a considerable gain from 
using 
low-rank techniques.


\appendix 

\section{Stability of balanced and reduced systems}\label{sec:stability}

We now prove that the balancing transformation 
(\ref{balance1})--(\ref{balance2}) preserves the stability of the submatrices 
$\tilde{A}_{11}$ and $\tilde{A}_{22}$. The idea of the proof essentially follows 
\cite[Thm.~7.9]{antoulas2005}; see also \cite{BDRR2016}.  
 We confine our attention to $\tilde{A}_{22}$, the stability of which is needed 
for the averaging principle to apply, and we stress that the proof readily 
carries over to the proof that $\tilde{A}_{11}$ is stable (Hurwitz). Let
\begin{equation}
\begin{aligned}
\tilde{A} &= \left (\begin{array}{cc}\tilde{A}_{11} & \tilde{A}_{12}\\ 
\tilde{A}_{21} & \tilde{A}_{22}\end{array}\right),\quad 
\tilde{N}_{k}  = \left (\begin{array}{cc}\tilde{N}_{k,11} & \tilde{N}_{k,12}\\ 
\tilde{N}_{k,21} & \tilde{N}_{k,22}\end{array}\right),\\
\tilde{B} & = \left (\begin{array}{c}\tilde{B}_{1}\\ \tilde{B}_{2} 
\end{array}\right),\quad 
\tilde{C}  = \left (\begin{array}{ll}\tilde{C}_{1} & 
\tilde{C}_{2}\end{array}\right)\,
\end{aligned}
\end{equation}
denote the coefficients of the balanced bilinear system for $\eps=1$.  

\begin{lem}\label{lem:Astability1} 
Suppose that Assumptions 1--5 from pages \pageref{ass} and \pageref{ass2} hold, 
and let the matrix of Hankel singular values $\Sigma$ be defined as in 
(\ref{balance1}). If the submatrices $\Sigma_{1}$ and $\Sigma_{2}$ have disjoint 
spectra, $\lambda(\Sigma_{1})\cap\lambda(\Sigma_{2})=\emptyset$, then 
\[
\lambda(\tilde{A}_{22})\subset \C_{-}\,,
\]
where $\C_{-}$ denotes the open left complex half-plane. 
\end{lem}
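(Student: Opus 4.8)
The plan is to pass to balanced coordinates and then adapt the classical stability argument for balanced truncation (\cite[Thm.~7.9]{antoulas2005}, in the Pernebo--Silverman spirit) to the extra bilinear terms. By \eqref{balance1}--\eqref{balance2} both generalized Gramians become the block-diagonal matrix $\Sigma=\diag(\Sigma_1,\Sigma_2)$, so the balanced coefficients satisfy the balanced versions of \eqref{P} and \eqref{Q}. Reading off their $(2,2)$-blocks and using that $\Sigma$ is block diagonal, I obtain
\[
\tilde A_{22}\Sigma_2 + \Sigma_2\tilde A_{22}^* + \sum_{k=1}^m \tilde N_{k,22}\Sigma_2\tilde N_{k,22}^* + R_P = 0,
\]
with $R_P := \tilde B_2\tilde B_2^* + \sum_k \tilde N_{k,21}\Sigma_1\tilde N_{k,21}^*\succeq 0$, together with its adjoint carrying $R_Q := \tilde C_2^*\tilde C_2 + \sum_k \tilde N_{k,12}^*\Sigma_1\tilde N_{k,12}\succeq 0$. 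Since Assumptions 4--5 force $\Sigma_2\succ 0$, every term $\tilde N_{k,22}\Sigma_2\tilde N_{k,22}^*$ is positive semidefinite, hence $\tilde A_{22}\Sigma_2+\Sigma_2\tilde A_{22}^*\preceq 0$; testing with a left eigenvector then gives the weak conclusion $\lambda(\tilde A_{22})\subseteq\overline{\C_-}$.

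Next I would exclude eigenvalues on the imaginary axis. Assume $\tilde A_{22}x=i\omega x$ with $x\neq 0$, $\omega\in\R$. Testing the observability block between $x^*$ and $x$ cancels the $\tilde A_{22}$-contributions and leaves a sum of nonnegative terms, each of which must vanish: $\tilde N_{k,22}x=0$, $\tilde N_{k,12}x=0$, $\tilde C_2x=0$. The vector $m:=\Sigma_2x$ is a left eigenvector, $m^*\tilde A_{22}=i\omega m^*$, and testing the controllability block between $m^*$ and $m$ yields the dual relations $\tilde N_{k,22}^*\Sigma_2x=0$, $\tilde N_{k,21}^*\Sigma_2x=0$, $\tilde B_2^*\Sigma_2x=0$. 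Re-inserting these into the two block equations gives first $\tilde A_{22}^*\Sigma_2x=-i\omega\Sigma_2x$ and then $\tilde A_{22}\Sigma_2^2x=i\omega\Sigma_2^2x$. Thus $E:=\ker(\tilde A_{22}-i\omega I)$ is $\Sigma_2^2$-invariant, and since $\Sigma_2\succ 0$ it contains an eigenvector $x_0$ of $\Sigma_2$, say $\Sigma_2x_0=\sigma x_0$ with $\sigma\in\lambda(\Sigma_2)$.

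Finally I would feed $x_0$ into the off-diagonal blocks. After dropping every term annihilated by the relations above, the $(1,2)$-block of the observability equation applied to $x_0$ reduces to $\tilde A_{21}^*\Sigma_2x_0+\Sigma_1\tilde A_{12}x_0=0$, and the $(1,2)$-block of the controllability equation applied to $\Sigma_2x_0$ reduces to $\tilde A_{12}\Sigma_2^2x_0+\Sigma_1\tilde A_{21}^*\Sigma_2x_0=0$. Setting $u_0:=\tilde A_{12}x_0$ and eliminating $\tilde A_{21}^*x_0$ by means of $\Sigma_2x_0=\sigma x_0$ gives $\Sigma_1^2u_0=\sigma^2u_0$. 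If $u_0\neq 0$, then $\sigma\in\lambda(\Sigma_1)\cap\lambda(\Sigma_2)$, contradicting the disjointness hypothesis; if $u_0=0$, then $\begin{pmatrix}0\\x_0\end{pmatrix}$ is an eigenvector of the full balanced matrix $\tilde A$ for the imaginary eigenvalue $i\omega$, contradicting that $\tilde A$ is similar to $A$, which is Hurwitz by Assumption 1. Hence $\tilde A_{22}$ has no imaginary eigenvalue and $\lambda(\tilde A_{22})\subset\C_-$.

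I expect the main difficulty to be the bookkeeping forced by the bilinear terms $\sum_k\tilde N_{k,\cdot}\Sigma\tilde N_{k,\cdot}^*$ that are absent from the linear template: I must check that in each test they appear as genuine positive-semidefinite summands---this is where $\Sigma_2\succ 0$ is essential---so that the eigenvector test annihilates them and produces exactly the conditions ($\tilde N_{k,22}x=0$, $\tilde N_{k,21}^*\Sigma_2x=0$, and so on) that later clear the $N_k$-contributions from the two off-diagonal blocks. The other delicate point is selecting the correct test vector on the controllability side, namely the left eigenvector $m^*=x^*\Sigma_2$ rather than $x^*$, since only this choice makes the $\tilde A_{22}$-terms cancel and the remaining semidefinite terms vanish.
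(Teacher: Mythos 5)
Your proof is correct, and while your first step coincides with the paper's (reading off the $(2,2)$ blocks of the two balanced generalized Lyapunov equations and testing with an eigenvector to get $\Re\lambda\le 0$ --- incidentally, your $R_P=\tilde B_2\tilde B_2^*+\sum_k\tilde N_{k,21}\Sigma_1\tilde N_{k,21}^*$ silently corrects an index typo in the paper's corresponding display, which has $\tilde N_{k,21}\Sigma_1\tilde N_{k,12}^*$), your exclusion of imaginary-axis eigenvalues takes a genuinely different route. The paper, following \cite[Thm.~7.9]{antoulas2005}, performs a similarity transformation splitting $\tilde A_{22}$ into a Hurwitz block and a block $\hat A_{33}$ with purely imaginary spectrum, propagates zero blocks ($\hat B_3$, $\hat C_3$, the $\hat N$-blocks) through the Lyapunov equations, deduces that the Gramians are block diagonal, uses the disjoint-spectra hypothesis on the $(1,3)$ blocks to force $\hat A_{13}=\hat A_{31}=0$, and contradicts complete controllability and observability (Assumptions 4--5). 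You instead argue with a single eigenvector: the annihilation relations from the $(2,2)$ blocks, the observation that $\Sigma_2 x$ is a left eigenvector, the $\Sigma_2^2$-invariance of $\ker(\tilde A_{22}-i\omega I)$ yielding a common eigenvector $x_0$ of $\tilde A_{22}$ and $\Sigma_2$, and the $(1,2)$ blocks yielding $\Sigma_1^2u_0=\sigma^2u_0$ for $u_0=\tilde A_{12}x_0$; the contradiction is then either with the disjointness hypothesis (if $u_0\neq 0$) or with Assumption~1 (if $u_0=0$, since $(0,x_0)^{T}$ becomes an imaginary eigenvector of $\tilde A$, which is similar to the Hurwitz matrix $A$). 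Your version is more elementary and avoids the auxiliary change of basis and the global bookkeeping on all Gramian blocks, at the price of contradicting stability of $A$ rather than minimality; the paper's version handles the entire imaginary eigenspace at once and exhibits the obstruction structurally as a decoupled uncontrollable and unobservable subsystem. Two presentational nits, neither a gap: the assertion that $m=\Sigma_2x$ is a left eigenvector logically comes \emph{after} inserting $\tilde N_{k,12}x=\tilde N_{k,22}x=\tilde C_2x=0$ into the vector form of the observability $(2,2)$ block (your ``re-inserting'' sentence supplies exactly this, so reorder it); and when feeding $x_0$ into the off-diagonal blocks you should say explicitly that the annihilation and dual relations hold for $x_0$ because $x_0\in\ker(\tilde A_{22}-i\omega I)$, so the earlier derivation applies to it verbatim.
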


\begin{proof}
We first prove that the spectrum of $\tilde{A}_{22}$ lies in the closed left 
complex half-plane (including the imaginary axis). To this end note that 
(\ref{Q}) implies that 
\[
\tilde{A}_{22}\Sigma_{2} + \Sigma_{2}\tilde{A}_{22}^{*} + 
\sum_{k=1}^{m}\left(\tilde{N}_{k,22}\Sigma_{2}\tilde{N}^{*}_{k,22} + 
\tilde{N}_{k,21}\Sigma_{1}\tilde{N}^{*}_{k,12}  \right) + 
\tilde{B}_{2}\tilde{B}_{2}^{*} = 0\,.
\]
Now let $v\in\C^{n-d}$ be an eigenvector of $\tilde{A}^{*}_{22}$ to the 
eigenvalue $\lambda\in\C$, i.e. $\tilde{A}^{*}_{22}v=\lambda v$. Multiplication 
of the last equation with $v^{*}$ and $v$ from the both sides yields 
\[
2 \Re(\lambda) \big|\Sigma^{1/2}_{2}v\big|^{2}  + 
\sum_{k=1}^{m}\left(\big|\Sigma_{2}^{1/2}\tilde{N}^{*}_{k,22}v\big|^{2} + 
\big|\Sigma_{1}^{1/2}\tilde{N}^{*}_{k,12}  v\big|^{2}\right) + 
\big|\tilde{B}_{2}^{*}v\big|^{2} = 0 \,.
\]
Noting that both $\Sigma_{1}$ and $\Sigma_{2}$ are positive definite, it follows 
that $\Re(\lambda)\le 0$, thus the eigenvalues of $\tilde{A}_{22}$ are in the 
left complex half-plane or on the imaginary axis. 

As a second step we will demonstrate that indeed $\Re(\lambda) < 0$. We proceed 
by contradiction and suppose the contrary. Following \cite{antoulas2005}, there 
exists a linear change of variables $x\mapsto V x$, $x\in\C^{n}$, such that 
\[
V=\left (\begin{array}{cc}\one & \zero \\ \zero & V_{22} 
\end{array}\right)\,,\quad V_{22}\tilde{A}_{22}V_{22}^{-1} =\left 
(\begin{array}{cc} \hat{A}_{22} & \zero \\ \zero & \hat{A}_{33} 
\end{array}\right)\,,
\]
with $\hat{A}_{22}$ having eigenvalues in $\C_{-}$ while the eigenvalues of 
$\hat{A}_{33}$ are pure imaginary. Under the change of variables, the balanced 
coefficients transform as follows: 
\begin{align*}
\hat{A} &= \left (\begin{array}{ccc} \hat{A}_{11} & \hat{A}_{12} & \hat{A}_{13} 
\\ \hat{A}_{21} & \hat{A}_{22} & \zero\\ \hat{A}_{31} & \zero & \hat{A}_{33} 
\end{array}\right),\quad 
\hat{N}_{k} = \left (\begin{array}{ccc} \hat{N}_{k,11} & \hat{N}_{k,12} & 
\hat{N}_{k,13} \\ \hat{N}_{k,21} & \hat{N}_{k,22} & \hat{N}_{k,23} \\ 
\hat{N}_{k,31} & \hat{N}_{k,32} & \hat{N}_{k,33} \end{array}\right),\\
& \hat{B} = \left (\begin{array}{c}\hat{B}_{1}\\ \hat{B}_{2} \\ 
\hat{B}_{3}\end{array}\right),\quad 
\hat{C}  = \left (\begin{array}{lll}\hat{C}_{1} & \hat{C}_{2} & 
\hat{C}_{3}\end{array}\right).
\end{align*}
Here $\hat{A}_{11}=\tilde{A}_{11}$, $\hat{N}_{k,11}=\tilde{N}_{k,11}$, 
$\hat{B}_{1}=\tilde{B}_{1}$, and $\hat{C}_{1}=\tilde{C}_{1}$. 
Accordingly, we have
 \begin{align*}
\hat{Q} &= \left (\begin{array}{ccc} \Sigma_{1} &\zero & \zero \\ \zero & 
\hat{Q}_{22} & \hat{Q}_{23} \\ \zero & \hat{Q}_{32} & \hat{Q}_{33} 
\end{array}\right),\quad 
\hat{P} = \left (\begin{array}{ccc} \Sigma_{1} &\zero & \zero \\ \zero & 
\hat{P}_{22} & \hat{P}_{23} \\ \zero & \hat{P}_{32} & \hat{P}_{33} 
\end{array}\right).
\end{align*}
Now consider the $(3,3)$ block of the generalized Lyapunov equation (\ref{Q}) 
for the controllability Gramian that reads
\[
\hat{A}_{33}\hat{Q}_{33} + \hat{Q}_{33}\hat{A}_{33}^{*} + \sum_{k=1}^{m} 
(\hat{N}_{k,31}\;\hat{N}_{k,32} \; \hat{N}_{k,33})\hat{Q} 
(\hat{N}_{k,31}\;\hat{N}_{k,32} \; \hat{N}_{k,33})^{T} + 
\hat{B}_{3}\hat{B}_{3}^{*} = 0
\]
Now let $w$ be an eigenvector of $\hat{A}_{33}$ to a pure imaginary eigenvalue 
$\lambda=i\sigma$. Then sandwiching the last equation with $w^{*}$ and $w$ from 
the left and from the right and iterating the argument from above, it follows 
that
 \[
\sum_{k=1}^{m} \big|\hat{Q}^{1/2} (\hat{N}_{k,31}\;\hat{N}_{k,32} \; 
\hat{N}_{k,33})^{T}w\big|^{2} + \big|\hat{B}_{3}^{*}w\big|^{2} = 0\,,
\]
which, by complete controllability and thus positivity of the matrix $\hat{Q}$ 
implies that $
(\hat{N}_{k,31}\;\hat{N}_{k,32} \; \hat{N}_{k,33})^{T}w=0$ for all 
$k=1,\ldots,m$. Therefore $\hat{B}_{3}^{*}w=0$, and as we can pick $w$ to be any 
of the linearly independent eigenvectors of $\hat{A}_{33}$ we conclude that 
\[
\hat{B}_{3} =\zero\,,\quad \hat{N}_{k,31} = \zero \,,\quad \hat{N}_{k,32} 
=\zero\,,\quad \hat{N}_{k,33}= \zero\,,\quad k=1,\ldots,m  \,.
\]  
By the same argument, using the adjoint Lyapunov equation (\ref{P}) for the 
positive definite observability Gramian, it follows that
\[
\hat{C}_{3} =\zero\,,\quad \hat{N}_{k,13} = \zero \,,\quad \hat{N}_{k,23} 
=\zero\,,\quad k=1,\ldots,m  \,.
\]  
This entails that the $(2,3)$ block of the Lyapunov equation for $\hat{Q}$ has 
the form
\[
\hat{A}_{22}\hat{Q}_{23} = \hat{Q}_{23}\hat{A}_{22}^{*} = 0\,.
\]
Hence $\hat{Q}_{23}=\zero$ and the analogous argument for the observability 
Gramian yields that $\hat{P}_{23}=\zero$. Note that the Gramians are hermitian, 
i.e., $\hat{Q}_{23}=\hat{Q}_{32}^{*}$ and $\hat{P}_{23}=\hat{P}_{32}^{*}$, 
which 
implies that the Gramians are block diagonal: 
 \begin{align*}
\hat{Q} &= \left (\begin{array}{ccc} \Sigma_{1} &\zero & \zero \\ \zero & 
\hat{Q}_{22} & \zero \\ \zero & \zero & \hat{Q}_{33} \end{array}\right),\quad 
\hat{P} = \left (\begin{array}{ccc} \Sigma_{1} &\zero & \zero \\ \zero & 
\hat{P}_{22} & \zero \\ \zero & \zero & \hat{P}_{33} \end{array}\right).
\end{align*}
The Lyapunov equations for the $(1,3)$ blocks thus reads
\[
\hat{A}_{13}\hat{Q}_{33} + \Sigma_{1}\hat{A}_{31}^{*} = 0\,,\quad 
\hat{A}_{31}^{*}\hat{P}_{33} + \Sigma_{1}\hat{A}_{13} = 0 
\]
Now multiplying the first of the two equations by $\Sigma_{1}$ from the left and 
substituting $\Sigma_{1}\hat{A}_{13}$ by $-\hat{A}_{31}^{*}\hat{P}_{33}$ yields 
$\hat{A}_{31} \hat{P}_{33}\hat{Q}_{33} = \Sigma_{1}^{2}\hat{A}_{31}^{*}$. 
Interchanging the two Lyapunov equations we can show that 
$\Sigma_{1}^{2}\hat{A}_{13}^{*}=\hat{A}_{13}\hat{Q}_{33}\hat{P}_{33}$. 
Now recall that the diagonal matrix $\Sigma^{2}$ contains the eigenvalues of 
$\hat{P}\hat{Q}$ or $\hat{Q}\hat{P}$, and since the Gramians are block diagonal, 
it follows that $\Sigma_{2}^{2}$ contains the eigenvalues of 
$\hat{P}_{33}\hat{Q}_{33}$ or $\hat{Q}_{33}\hat{P}_{33}$. By the assumption that 
$\Sigma_{1}$ and $\Sigma_{2}$ have no eigenvalues in common, we conclude that
\[
\hat{A}_{13} = \zero\,,\quad \hat{A}_{31} = \zero\,.
\]
This shows that the matrix $\hat{A}$ the form 
\[
\hat{A} = \left (\begin{array}{ccc} \hat{A}_{11} & \hat{A}_{12} & \zero \\ 
\hat{A}_{21} & \hat{A}_{22} & \zero\\ \zero & \zero & \hat{A}_{33} 
\end{array}\right),
\]
which together with $\hat{B}_{3}=\zero$ and $\hat{C}_{3}=\zero$ violates the 
assumption of complete controllability and observability on \pageref{ass2}. 
Hence $\tilde{A}_{22}$ cannot have eigenvalues on the imaginary axis, in other 
words: $\lambda(\tilde{A}_{22})\subset\C_{-}$. 
\end{proof}

Consequences of Lemma \ref{lem:Astability1} are the analogous statements for the 
matrix $\tilde{A}_{11}$ and the Schur complement of $\tilde{A}_{22}$.

\begin{cor}\label{cor:Astability2}
Under the assumptions of Lemma \ref{lem:Astability1} it holds that 
\[
\lambda(\tilde{A}_{11}) \subset \C_{-}\,.
\]
\end{cor}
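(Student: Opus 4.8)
The plan is to obtain Corollary \ref{cor:Astability2} by replaying the proof of Lemma \ref{lem:Astability1} \emph{verbatim}, with the roles of the two index blocks $1$ and $2$ (and correspondingly of $\Sigma_{1}$ and $\Sigma_{2}$) interchanged. The whole setup is symmetric under this swap: in balanced coordinates both Gramians coincide with $\Sigma=\diag(\Sigma_{1},\Sigma_{2})$, and the hypothesis $\lambda(\Sigma_{1})\cap\lambda(\Sigma_{2})=\emptyset$ treats the two diagonal blocks on an equal footing. Nothing distinguishes $\tilde{A}_{11}$ from $\tilde{A}_{22}$ except the label, so the argument transfers directly; this is exactly the remark made at the start of the proof of Lemma \ref{lem:Astability1}.

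Concretely, I would first reproduce the ``closed left half-plane'' step. Reading off the $(1,1)$ block of the controllability Lyapunov equation (\ref{P}) in balanced coordinates (where $P=\Sigma$) gives
\begin{equation*}
\tilde{A}_{11}\Sigma_{1} + \Sigma_{1}\tilde{A}_{11}^{*} + \sum_{k=1}^{m}\left(\tilde{N}_{k,11}\Sigma_{1}\tilde{N}_{k,11}^{*} + \tilde{N}_{k,12}\Sigma_{2}\tilde{N}_{k,12}^{*}\right) + \tilde{B}_{1}\tilde{B}_{1}^{*} = 0 .
\end{equation*}
Sandwiching this identity between $v^{*}$ and $v$, where $\tilde{A}_{11}^{*}v=\lambda v$, and using positive definiteness of $\Sigma_{1}$ and $\Sigma_{2}$ exactly as before, yields $2\Re(\lambda)\,|\Sigma_{1}^{1/2}v|^{2}\le 0$, hence $\Re(\lambda)\le 0$.

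For the strict inequality I would argue by contradiction, assuming $\tilde{A}_{11}$ has a purely imaginary eigenvalue. Following the lemma, I would apply a change of variables of the form $W=\diag(W_{11},\one)$ that block-diagonalizes $\tilde{A}_{11}$ into a Hurwitz part and a pure-imaginary part, while leaving the block attached to $\Sigma_{2}$ untouched. Sandwiching both Lyapunov equations (\ref{P})--(\ref{Q}) with the eigenvectors of the pure-imaginary block forces the corresponding rows and columns of the $\tilde{N}_{k}$, together with the relevant blocks of $\tilde{B}$ and $\tilde{C}$, to vanish, so that both Gramians become block diagonal. The surviving cross-block relations then express the product of the Gramian restrictions on the pure-imaginary block in terms of a sub-block of $\Sigma_{1}^{2}$, whose spectrum is disjoint from that of $\Sigma_{2}$; this disjointness forces the remaining couplings to vanish, decoupling an uncontrollable and unobservable subsystem and contradicting Assumptions 4 and 5. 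Therefore $\Re(\lambda)<0$ and $\lambda(\tilde{A}_{11})\subset\C_{-}$.

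The only genuine obstacle is bookkeeping: one must verify that no step of the original proof silently used the ordering of $\Sigma_{1}$ and $\Sigma_{2}$ or the fact that block $1$ was the retained ``large'' block. A short audit confirms that the proof relied only on (i) positive definiteness of both $\Sigma_{1}$ and $\Sigma_{2}$, (ii) the symmetric disjointness of their spectra, and (iii) complete controllability and observability of the full system, all of which are insensitive to the swap. Hence I expect the corollary to follow with no new estimates, stated simply as an application of the lemma's argument with the block indices interchanged.
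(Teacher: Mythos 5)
Your proposal is correct and matches the paper exactly: the paper's proof of Corollary \ref{cor:Astability2} is the one-line remark that it is ``a simple adaption'' of the proof of Lemma \ref{lem:Astability1} (together with \cite[Thm~2.2]{BDRR2016}), and your block-swapped replay --- including the $(1,1)$ block of the balanced Lyapunov equation and the contradiction step with a transformation of the form $\diag(W_{11},\one)$ --- is precisely that adaptation, carried out in detail.
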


\begin{proof}
The proof is a simple adaption of the one of Lemma \ref{lem:Astability1} and 
\cite[Thm~2.2]{BDRR2016}. 
\end{proof}

\begin{cor}\label{cor:Astability3}
Under the assumptions of Lemma \ref{lem:Astability1} it holds that 
\[
\lambda(\tilde{A}_{11} - \tilde{A}_{12}\tilde{A}_{22}^{-1}\tilde{A}_{21}) 
\subset \C_{-}\,.
\]
\end{cor}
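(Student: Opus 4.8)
By Lemma~\ref{lem:Astability1} the spectrum of $\tilde A_{22}$ lies in $\C_-$, so $\tilde A_{22}$ is invertible and the Schur complement $\hat A := \tilde A_{11} - \tilde A_{12}\tilde A_{22}^{-1}\tilde A_{21}$ is well defined. The plan is to exhibit the positive definite matrix $\Sigma_1$ as a solution of a generalized Lyapunov \emph{inequality} for $\hat A$ and to read off stability from it, in the spirit of the proof of Lemma~\ref{lem:Astability1}. Throughout I work in the balanced coordinates of (\ref{balance1})--(\ref{balance2}) for $\eps=1$, in which both Gramians equal $\Sigma=\diag(\Sigma_1,\Sigma_2)$, and I recall that $\Sigma_1$ and $\Sigma_2$ are positive definite by Assumptions~4--5.

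First I would expand $\hat A\Sigma_1+\Sigma_1\hat A^*$ using the block decomposition of the generalized Lyapunov equation for the controllability Gramian $\Sigma$ (the same equation whose $(2,2)$ block is displayed in the proof of Lemma~\ref{lem:Astability1}). Writing $G:=\tilde A_{12}\tilde A_{22}^{-1}$, the $(1,1)$ block supplies $\tilde A_{11}\Sigma_1+\Sigma_1\tilde A_{11}^*$, the $(2,1)$ block is used to replace the cross term $\tilde A_{21}\Sigma_1$, and the $(2,2)$ block, multiplied by $G$ and $G^*$, is used to rewrite $G(\tilde A_{22}\Sigma_2+\Sigma_2\tilde A_{22}^*)G^*$. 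After substituting these three relations and completing the square---separately for the $\tilde B$-part and, for each $k$, for the $\Sigma_1$- and $\Sigma_2$-weighted parts coming from $\sum_k\tilde N_k\Sigma\tilde N_k^*$---one arrives at the identity
\begin{equation*}
\hat A\Sigma_1+\Sigma_1\hat A^* = -\hat B\hat B^* - \sum_{k=1}^m M_{k,1}\Sigma_1 M_{k,1}^* - \sum_{k=1}^m M_{k,2}\Sigma_2 M_{k,2}^*,
\end{equation*}
with $\hat B=\tilde B_1-G\tilde B_2$, $M_{k,1}=\tilde N_{k,11}-G\tilde N_{k,21}$ and $M_{k,2}=\tilde N_{k,12}-G\tilde N_{k,22}$. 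Since $\Sigma_1,\Sigma_2$ are positive (semi)definite, the right-hand side is negative semidefinite, so sandwiching with an eigenvector $v$ of $\hat A^*$ for the eigenvalue $\lambda$, exactly as in Lemma~\ref{lem:Astability1}, yields $2\,\Re(\lambda)\,\big|\Sigma_1^{1/2}v\big|^2\le 0$ and hence $\lambda(\hat A)\subseteq\overline{\C_-}$.

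The remaining, and genuinely delicate, step is to exclude eigenvalues on the imaginary axis. Here I would argue by contradiction as in the second half of the proof of Lemma~\ref{lem:Astability1}. An eigenvector $v$ of $\hat A^*$ with purely imaginary eigenvalue annihilates the entire (semidefinite) right-hand side above, forcing $\hat B^*v=0$ and $M_{k,1}^*v=M_{k,2}^*v=0$ for all $k$; the dual identity obtained from the adjoint (observability) Lyapunov equation yields the matching conditions $\hat C u=0$, etc., for an imaginary eigenvector $u$ of $\hat A$, where $\hat C=\tilde C_1-\tilde C_2\tilde A_{22}^{-1}\tilde A_{21}$. Introducing the change of variables that block-diagonalizes the imaginary part of $\hat A$ and invoking the hypothesis $\lambda(\Sigma_1)\cap\lambda(\Sigma_2)=\emptyset$ precisely as in Lemma~\ref{lem:Astability1}, these degeneracies should force a decoupled block that is simultaneously uncontrollable and unobservable, contradicting Assumptions~4--5 and thus ruling out imaginary eigenvalues. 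I expect this last contradiction to be the main obstacle: the completion-of-squares identity is a direct (if lengthy) computation, whereas turning the vanishing of the residual on the imaginary axis into a genuine violation of minimality requires carefully transporting the argument of Lemma~\ref{lem:Astability1} from the fast block $\tilde A_{22}$ to the Schur complement $\hat A$.
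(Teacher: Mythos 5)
Your first step is both correct and genuinely different from the paper's argument: the completion-of-squares identity you claim does hold. Combining the $(1,1)$, $(1,2)$ and $(2,2)$ blocks of the balanced equation (\ref{P}) exactly as you describe, with $G=\tilde A_{12}\tilde A_{22}^{-1}$, one verifies
\[
\hat A\Sigma_1+\Sigma_1\hat A^{*}=-\bigl(\tilde B_1-G\tilde B_2\bigr)\bigl(\tilde B_1-G\tilde B_2\bigr)^{*}-\sum_{k=1}^{m}M_{k,1}\Sigma_1 M_{k,1}^{*}-\sum_{k=1}^{m}M_{k,2}\Sigma_2 M_{k,2}^{*},
\]
and sandwiching with a left eigenvector indeed gives $\Re(\lambda)\le 0$ for every eigenvalue of $\hat A$. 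So the weak statement $\lambda(\hat A)\subset\overline{\C}_{-}$ is secured by your computation, by a route the paper does not take.

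The genuine gap is exactly where you suspect it: the exclusion of imaginary eigenvalues, and your plan to ``transport'' the second half of Lemma \ref{lem:Astability1} breaks at a concrete point. Your controllability-type identity has residuals $M_{k,1}=\tilde N_{k,11}-G\tilde N_{k,21}$ and $M_{k,2}=\tilde N_{k,12}-G\tilde N_{k,22}$, but the dual identity obtained from (\ref{Q}) produces \emph{different} residuals, namely $\tilde N_{k,11}-\tilde N_{k,12}\tilde A_{22}^{-1}\tilde A_{21}$, $\tilde N_{k,21}-\tilde N_{k,22}\tilde A_{22}^{-1}\tilde A_{21}$ and $\hat C$. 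Since $M_{k,1}\neq\tilde N_{k,11}-\tilde N_{k,12}\tilde A_{22}^{-1}\tilde A_{21}$ in general---this is precisely the non-commutativity issue recorded in the Remark after the corollary, citing \cite{Redmann2017}---the pair $(\Sigma_1,\Sigma_1)$ is \emph{not} a balanced Gramian pair of any single reduced bilinear system, and the machinery of Lemma \ref{lem:Astability1} (block-diagonalizing the critical part, deducing block-diagonal Gramians, invoking $\lambda(\Sigma_1)\cap\lambda(\Sigma_2)=\emptyset$) cannot be rerun verbatim: at the reduced level there is no residual two-block Gramian structure for the disjoint-spectra hypothesis to act on. Moreover, contradicting Assumptions 4--5 requires lifting the degenerate eigenvector of the $d$-dimensional $\hat A$ back to the $n$-dimensional system, which your sketch does not supply; the naive lift $\bigl(v,\,-\tilde A_{22}^{-*}\tilde A_{12}^{*}v\bigr)$ is not an eigenvector of $\tilde A^{*}$. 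The paper sidesteps all of this with a one-line reduction: the reciprocal system $\bigl(\tilde A^{-1},\tilde A^{-1}\tilde N_k,\tilde A^{-1}\tilde B,-\tilde C\tilde A^{-1}\bigr)$ is balanced (with the same $\Sigma$) if and only if the original is, the $(1,1)$ block of $\tilde A^{-1}$ is $\bigl(\tilde A_{11}-\tilde A_{12}\tilde A_{22}^{-1}\tilde A_{21}\bigr)^{-1}$ by the Schur-complement inversion formula, Corollary \ref{cor:Astability2} applies to the reciprocal system, and matrix inversion preserves the sign of the real part of eigenvalues. To rescue your direct route you would need a new argument for the imaginary-axis case---or simply fall back on the reciprocal-system reduction for that step, keeping your identity as an independent derivation of the weak inequality.
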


\begin{proof}
The assertion follows from Corollary \ref{cor:Astability2} by noting that the 
reciprocal system 
\[
\big(\hat{A},\hat{N}_{k},\hat{B},\hat{C}\big):=\big(\tilde{A}^{-1},\tilde{A}^{-1
}\tilde{N}_{k},\tilde{A}^{-1}\tilde{B},-\tilde{C}\tilde{A}^{-1}\big)
\]  
is balanced if and only if $(\tilde{A},\tilde{N}_{k},\tilde{B},\tilde{C})$ is 
balanced, with 
\[
\hat{A}_{11} = \tilde{A}_{11} - 
\tilde{A}_{12}\tilde{A}_{22}^{-1}\tilde{A}_{21}\,.
\]  
\end{proof}

\begin{rem}
	Note that $\hat{N}_{k} = \tilde{N}_{k,11} - 
	\tilde{N}_{k,12}\tilde{A}_{22}^{-1}\tilde{A}_{21}$ is not the (1,1) 
block of  the matrix $\tilde{A}^{-1}\tilde{N}_k$, but rather the (1,1) 
coefficient of the matrix $\tilde{N}_k\tilde{A}^{-1}$ unless $\tilde{A}$ and 
$\tilde{N}$ commute. This has been pointed out in \cite{Redmann2017}, and as a 
consequence, the singular perturbation approximation 
(\ref{eq:avg})--(\ref{eq:avgCoeff}) is not the truncation of the reciprocal 
system as is the case for linear systems. Yet this does not affect the above 
argument and hence the      stability of the Schur complement 
$\hat{A}=\tilde{A}_{11} - \tilde{A}_{12}\tilde{A}_{22}^{-1}\tilde{A}_{21}$ . 
\end{rem}

%

%
%
%
%

\bibliographystyle{abbrv}
\bibliography{lyapunov}

\begin{thebibliography}{10}

\bibitem{ahmad2017}
M.~I. Ahmad, U.~Baur, and P.~Benner.
\newblock Implicit {V}olterra series interpolation for model reduction of
  bilinear systems.
\newblock {\em J. Comput. Appl. Math.}, 316(C):15--28, 2017.

\bibitem{albaiyat1993}
S.~Al-Baiyat and M.~Bettayeb.
\newblock A new model reduction scheme for k-power bilinear systems.
\newblock {\em Proc. 32nd IEEE Conf. Decis. Control}, 32:22--27, 1993.

\bibitem{andrianov2006a}
I.~Andrianov and P.~Saalfrank.
\newblock Theoretical study of vibration--phonon coupling of {H} adsorbed on a
  {S}i(100) surface.
\newblock {\em J. Chem. Phys.}, 124:034710, 2006.

\bibitem{Borzi2013}
M.~Annunziato and A.~Borzi.
\newblock A {F}okker--{P}lanck control framework for multidimensional
  stochastic processes.
\newblock {\em J. Comp. Appl. Math.}, 237(1):487 -- 507, 2013.

\bibitem{antoulas2005}
A.~Antoulas.
\newblock {\em Approximation of Large-Scale Dynamical Systems}.
\newblock SIAM, Philadelphia, 2005.

\bibitem{bai2006}
Z.~Bai and D.~Skoogh.
\newblock A projection method for model reduction of bilinear dynamical
  systems.
\newblock {\em Linear Algebra Appl.}, 415:406--425, 2006.

\bibitem{Baur2014}
U.~Baur, P.~Benner, and L.~Feng.
\newblock Model order reduction for linear and nonlinear systems: A
  system-theoretic perspective.
\newblock {\em Arch. Computat. Methods Eng.}, 21(4):331--358, 2014.

\bibitem{Benner2012}
P.~Benner and T.~Breiten.
\newblock Interpolation-based $\mathcal{H}_2$-model reduction of bilinear
  control systems.
\newblock {\em SIAM J. Matrix Anal. Appl.}, 33:859--885, 2012.

\bibitem{benner2011}
P.~Benner and T.~Damm.
\newblock Lyapunov equations, energy functionals, and model order reduction.
\newblock {\em J. Control Optim.}, 49:686--711, 2011.

\bibitem{Benner2011a}
P.~Benner and T.~Damm.
\newblock Lyapunov equations, energy functionals, and model order reduction.
\newblock {\em J. Control Optim.}, 49:686--711, 2011.

\bibitem{BDRR2016}
P.~Benner, T.~Damm, M.~Redmann, and Y.~Rodriguez~Cruz.
\newblock Positive operators and stable truncation.
\newblock {\em Linear Algebra Appl.}, 491:74--87, 2016.

\bibitem{Willcox2015}
P.~Benner, S.~Gugercin, and K.~Willcox.
\newblock A survey of projection-based model reduction methods for parametric
  dynamical systems.
\newblock {\em SIAM Review}, 57(4):483--531, 2015.

\bibitem{Benner2013}
P.~Benner, P.~K{\"u}rschner, and J.~Saak.
\newblock Efficient handling of complex shift parameters in the low-rank
  cholesky factor {ADI} method.
\newblock {\em Numerical Algorithms}, 62(2):225--251, 2013.

\bibitem{Berglund2013}
N.~Berglund.
\newblock Kramers' law: Validity, derivations and generalisations.
\newblock {\em Markov Processes Relat. Fields}, 19:459--490, 2013.

\bibitem{Nichols2011}
C.~Boess, A.~Lawless, N.~Nichols, and A.~Bunse-Gerstner.
\newblock State estimation using model order reduction for unstable systems.
\newblock {\em Computers \& Fluids}, 46:155--160, 2011.

\bibitem{breiten2010}
T.~Breiten and T.~Damm.
\newblock Krylov subspace methods for model order reduction of bilinear control
  systems.
\newblock {\em Syst. Control. Lett.}, 59:443--450, 2010.

\bibitem{BreKP16}
T.~Breiten, K.~Kunisch, and L.~Pfeiffer.
\newblock Control strategies for the {F}okker-{P}lanck equation.
\newblock Technical report, University of Graz, 2016.

\bibitem{breuer1997}
H.-P. Breuer, W.~Huber, and F.~Petruccione.
\newblock Stochastic wave-function method versus density matrix: a numerical
  comparison.
\newblock {\em Comp. Phys. Comm.}, 104:46--58, 1997.

\bibitem{breuer2002}
H.-P. Breuer and F.~Petruccione.
\newblock {\em The theory of open quantum systems}.
\newblock Oxford University Press, Oxford, 2002.

\bibitem{condon2004}
M.~Condon and R.~Ivanov.
\newblock Empirical balanced truncation for nonlinear systems.
\newblock {\em J.~Nonlinear Sci.}, 14:405--414, 2004.

\bibitem{condon2005}
M.~Condon and R.~Ivanov.
\newblock Nonlinear systems -- {A}lgebraic {G}ramians and model reduction.
\newblock {\em Int. J. Comp. Math.}, 24:202--219, 2005.

\bibitem{Dam08}
T.~Damm.
\newblock Direct methods and {ADI}-preconditioned {K}rylov subspace methods for
  generalized {L}yapunov equations.
\newblock {\em Numerical Linear Algebra with Applications}, 15:853--871, 2008.

\bibitem{Souza2012}
N.~de~Souza.
\newblock Pulling on single molecules.
\newblock {\em Nature Meth.}, 9:873--877, 2012.

\bibitem{flagg2012}
G.~Flagg.
\newblock {\em Interpolation Methods for the Model Reduction of Bilinear
  Systems}.
\newblock PhD Thesis, Virginia Tech, 2012.

\bibitem{Flagg2015}
G.~Flagg and S.~Gugercin.
\newblock Multipoint volterra series interpolation and $\mathcal{H}_2$ optimal
  model reduction of bilinear systems.
\newblock {\em SIAM Journal on Matrix Analysis and Applications},
  36(2):549--579, 2015.

\bibitem{gaitsgory1992}
V.~Gaitsgory.
\newblock Suboptimization of singularly perturbed control systems.
\newblock {\em SIAM J .Control Optim.}, 30(5):1228--1249, 1992.

\bibitem{grammel1997}
G.~Grammel.
\newblock Averaging of singularly perturbed systems.
\newblock {\em Nonlinear Analysis}, 28:1851--1865, 1997.

\bibitem{GugAB08}
S.~Gugercin, A.~Antoulas, and S.~Beattie.
\newblock $\mathcal{H}_2$ model reduction for large-scale dynamical systems.
\newblock {\em SIAM Journal on Matrix Analysis and Applications},
  30(2):609--638, 2008.

\bibitem{hartmann2013}
C.~Hartmann, B.~Sch\"afer-Bung, and A.~Zueva.
\newblock Balanced averaging of bilinear systems with applications to
  stochastic control.
\newblock {\em J. Control Optim.}, 51:2356--2378, 2013.

\bibitem{Hartmann2012}
C.~Hartmann and C.~Sch\"utte.
\newblock Efficient rare event simulation by optimal nonequilibrium forcing.
\newblock {\em J. Stat. Mech. Theor. Exp.}, 2012(11):P11004, 2012.

\bibitem{hartmann2010}
C.~Hartmann, V.~Vulcanov, and C.~Sch\"utte.
\newblock Balanced truncation of linear second-order systems: A {H}amiltonian
  approach.
\newblock {\em Multiscale Model. Simul.}, 8(4):1348--1367, 2010.

\bibitem{Hummer2010}
G.~Hummer and A.~Szabo.
\newblock Free energy profiles from single-molecule pulling experiments.
\newblock {\em Proc. Natl. Acad. Sci. USA}, 107(50):21441--21446, 2010.

\bibitem{isidori1973}
A.~Isidori.
\newblock Direct construction of minimal bilinear realizations from nonlinear
  input-output maps.
\newblock {\em IEEE Trans. Autom. Control}, 18:626--631, 1973.

\bibitem{lall2002}
S.~Lall, J.~Marsden, and S.~Glava\v{s}ki.
\newblock A subspace approach to balanced truncation for model reduction of
  nonlinear control systems.
\newblock {\em Int.~J.~Robust Nonlinear Control}, 12:519--535, 2002.

\bibitem{latorre2011}
J.~C. Latorre, P.~Metzner, C.~Hartmann, and C.~Sch\"utte.
\newblock A structure-preserving numerical discretization of reversible
  diffusions.
\newblock {\em Commun.~Math.~Sci.}, 9(4):1051 -- 1072, 2011.

\bibitem{LeBris2002}
C.~Le~Bris, Y.~Maday, and G.~Turinici.
\newblock {Towards efficient numerical approaches for quantum control}.
\newblock In A.~Bandrauk, M.~Delfour, and C.~Le~Bris, editors, {\em {Quantum
  Control: mathematical and numerical challenges}}, volume~33 of {\em CRM
  Proceedings \& Lecture Notes}, pages 127--142. {American Mathematical
  Society}, 2002.

\bibitem{lelievre2016}
T.~Lelièvre and G.~Stoltz.
\newblock Partial differential equations and stochastic methods in molecular
  dynamics.
\newblock {\em Acta Numerica}, 25:681--880, 2016.

\bibitem{Li2004}
J.-R. Li and J.~White.
\newblock Low-rank solution of {L}yapunov equations.
\newblock {\em SIAM Review}, 46(4):693--713, 2004.

\bibitem{lin2009}
Y.~Lin, L.~Bao, and Y.~Wei.
\newblock Order reduction of bilinear {MIMO} dynamical systems using new block
  {K}rylov subspaces.
\newblock {\em Comput. Math. Appl.}, 58:1093--1102, 2009.

\bibitem{lindblad1976}
G.~Lindblad.
\newblock On the generators of quantum dynamical semigroups.
\newblock {\em Comm. Math. Phys.}, 48:119 -- 130, 1976.

\bibitem{Moerner2015}
W.~E. Moerner.
\newblock Nobel lecture: Single-molecule spectroscopy, imaging, and
  photocontrol: Foundations for super-resolution microscopy.
\newblock {\em Rev. Mod. Phys.}, 87:1183--1212, 2015.

\bibitem{moore1981}
B.~Moore.
\newblock Principal component analysis in linear system: controllability,
  observability and model reduction.
\newblock {\em IEEE Trans. Automat. Control}, AC-26:17--32, 1981.

\bibitem{omalley1991}
R.~O'Malley.
\newblock {\em Singular Perturbation Methods for Ordinary Differential
  Equations}.
\newblock Springer, New York, 1991.

\bibitem{nice2016}
M.~Petreczky, R.~Wisniewski, and J.~Leth.
\newblock Moment matching for bilinear systems with nice selections.
\newblock {\em IFAC-PapersOnLine}, 49(18):838 -- 843, 2016.

\bibitem{phillips2003}
J.~Phillips.
\newblock Projection-based approaches for model reduction of weakly nonlinear,
  time-varying systems.
\newblock {\em IEEE T.~Comput.~Aided.~D.}, 22:171--187, 2003.

\bibitem{Redmann2017}
M.~Redmann and P.~Benner.
\newblock Singular perturbation approximation for linear systems with
  {L}{\'e}vy noise.
\newblock {\em Max Planck Institute Magdeburg Preprints}, MPIMD/15-22:1--15,
  2015.

\bibitem{Rey-Bellet2006}
L.~Rey-Bellet.
\newblock Open classical systems.
\newblock In S.~Attal, A.~Joye, and C.-A. Pillet, editors, {\em Open Quantum
  Systems II: The Markovian Approach}, pages 41--78. Springer Berlin
  Heidelberg, 2006.

\bibitem{Rug82}
W.~Rugh.
\newblock {\em Nonlinear System Theory}.
\newblock The Johns Hopkins University Press, 1982.

\bibitem{Boris2011}
B.~Sch\"{a}fer-Bung, C.~Hartmann, B.~Schmidt, and C.~Sch\"{u}tte.
\newblock Dimension reduction by balanced truncation: Application to
  light-induced control of open quantum systems.
\newblock {\em J Chem. Phys.}, 135(1):014112, 2011.

\bibitem{BSchmidt:78}
B.~Schmidt and C.~Hartmann.
\newblock Wavepacket: A \matlab\, package for numerical quantum dynamics. {II}:
  Open quantum systems and optimal control.
\newblock {\em Comp. Phys. Comm., in preparation}, 2017.

\bibitem{BSchmidt:75}
B.~Schmidt and U.~Lorenz.
\newblock Wavepacket: A \matlab\, package for numerical quantum dynamics. {I}:
  Closed quantum systems and discrete variable representations.
\newblock {\em Comp. Phys. Comm.}, 213:223--234, 2017.

\bibitem{siu1991}
T.~Siu and M.~Schetzen.
\newblock Convergence of {V}olterra series representation and {BIBO} stability
  of bilinear systems.
\newblock {\em Int. J. Syst. Sci.}, 22(12):2679--2684, 1991.

\bibitem{Tiwari2008}
A.~K. Tiwari, K.~B. M{\o}ller, and N.~E. Henriksen.
\newblock Selective bond breakage within the {HOD} molecule using optimized
  femtosecond ultraviolet laser pulses.
\newblock {\em Phys. Rev. A}, 78:065402, 2008.

\bibitem{vigodner1997}
A.~Vigodner.
\newblock Limits of singularly perturbed control problems with statistical
  dynamics of fast motions.
\newblock {\em SIAM J. Control Optim.}, 35(1):1--28, 1997.

\bibitem{Wachspress1988}
E.~Wachspress.
\newblock Iterative {S}olution of the {L}yapunov matrix equation.
\newblock {\em Appl. Math. Lett.}, 1:87--90, 1988.

\bibitem{watbled2005}
F.~Watbled.
\newblock On singular perturbations for differential inclusions on the infinite
  interval.
\newblock {\em J. Math. Anal. Appl.}, 310(2):362 -- 378, 2005.

\bibitem{weiss1999}
U.~Wei\ss.
\newblock {\em Quantum dissipative systems}, volume~10 of {\em Series in Modern
  Condensed Matter Physics}.
\newblock World Scientific, Singapore, 1999.

\bibitem{Wil70}
D.~Wilson.
\newblock Optimum solution of model-reduction problem.
\newblock {\em Proceedings of the Institution of Electrical Engineers},
  117(6):1161--1165, 1970.

\bibitem{Zewail1997}
A.~Zewail.
\newblock Femtochemistry: Chemical reaction dynamics and their control.
\newblock In P.~Gaspard and I.~Burghardt, editors, {\em Advances in Chemical
  Physics: Chemical Reactions and their Control on the Femtosecond Time Scale:
  20th Solvay Conference on Chemistry}, volume 101, pages 103--108. John Wiley
  \& Sons, Hoboken, NJ, USA, 1997.

\bibitem{zhang2002}
L.~Zhang and J.~Lam.
\newblock On {$H_2$} model reduction of bilinear systems.
\newblock {\em Automatica}, 38:205--216, 2002.

\bibitem{Zhang2014}
W.~Zhang, H.~Wang, C.~Hartmann, M.~Weber, and C.~Sch\"utte.
\newblock Applications of the cross-entropy method to importance sampling and
  optimal control of diffusions.
\newblock {\em SIAM J. Sci. Comput.}, 36(6):A2654--A2672, 2014.

\end{thebibliography}

\clearpage
\begin{table}
\centering
		\begin{tabular}{c|cccc|cccc}
		\hline \hline
		            &    \multicolumn{4}{|c}{BT method}     &    
\multicolumn{4}{|c}{H2 method}     \\
			     full & $d=200$ & $d=100$ & $d=50$  & $d=25$  & 
$d=200$ & $d=100$ & $d=50$  & $d=25$  \\
			\hline
			  -0.0000 & -0.0000 & -0.0000 & -0.0000 & -0.0000 & 
-0.0000 & -0.0000 & -0.0000 & -0.0000 \\
        -0.0037 & -0.0037 & -0.0037 & -0.0037 & -0.0037 & -0.0037 & -0.0037 & 
-0.0037 & -0.0037 \\
        -0.0073 & -0.0073 & -0.0073 & -0.0073 & -0.0074 & -0.0073 & -0.0073 & 
-0.0073 & -0.0074 \\
        -0.0118 & -0.0118 & -0.0118 & -0.0118 & -0.0118 & -0.0118 & -0.0118 & 
-0.0118 & -0.0118 \\
			\hline
        -0.3266 & -0.3266 & -0.3265 & -0.3260 & -0.3264 & -0.3266 & -0.3265 & 
-0.3255 & -0.3263 \\
        -0.3303 & -0.3303 & -0.3297 & -0.3294 & -0.3504 & -0.3303 & -0.3298 & 
-0.3298 & -0.3629 \\
        -0.3358 & -0.3358 & -0.3353 & -0.3423 & -0.5455 & -0.3358 & -0.3349 & 
-0.3432 & -0.5450 \\
        -0.3447 & -0.3447 & -0.3447 & -0.3432 & -0.5582 & -0.3447 & -0.3445 & 
-0.3432 & -0.6058 \\
		\hline
        -0.5421 & -0.5421 & -0.5422 & -0.5434 & -0.6083 & -0.5421 & -0.5412 & 
-0.5435 & -0.6336 \\
        -0.5453 & -0.5452 & -0.5455 & -0.5606 & -0.6487 & -0.5452 & -0.5450 & 
-0.5622 & -0.6676 \\
        -0.5666 & -0.5665 & -0.5657 & -0.5867 & -0.7683 & -0.5665 & -0.5663 & 
-0.5888 & -0.7791 \\
        -0.5948 & -0.5948 & -0.5951 & -0.6107 & -0.8003 & -0.5948 & -0.5951 & 
-0.6192 & -0.8052 \\
			\hline \hline 
		\end{tabular}\\[1cm]
\caption{Lowest twelve eigenvalues (in magnitude) of the discretization matrix 
$A$ of the FPE example for inverse temperature $\beta=4$ showing three clusters 
of four members each. Comparison of full versus reduced dynamics using the BT 
and H2 method. Results for the SP method (not shown) are very close to those 
for 
the BT method. For all practical purposes, the reduced systems for $d=200$ are 
virtually indistinguishable from the full-rank system.}

\label{tab:fpe_spectrumA}
\end{table}

\clearpage
\begin{figure}
    \centering
    \includegraphics[width=0.5\textwidth]{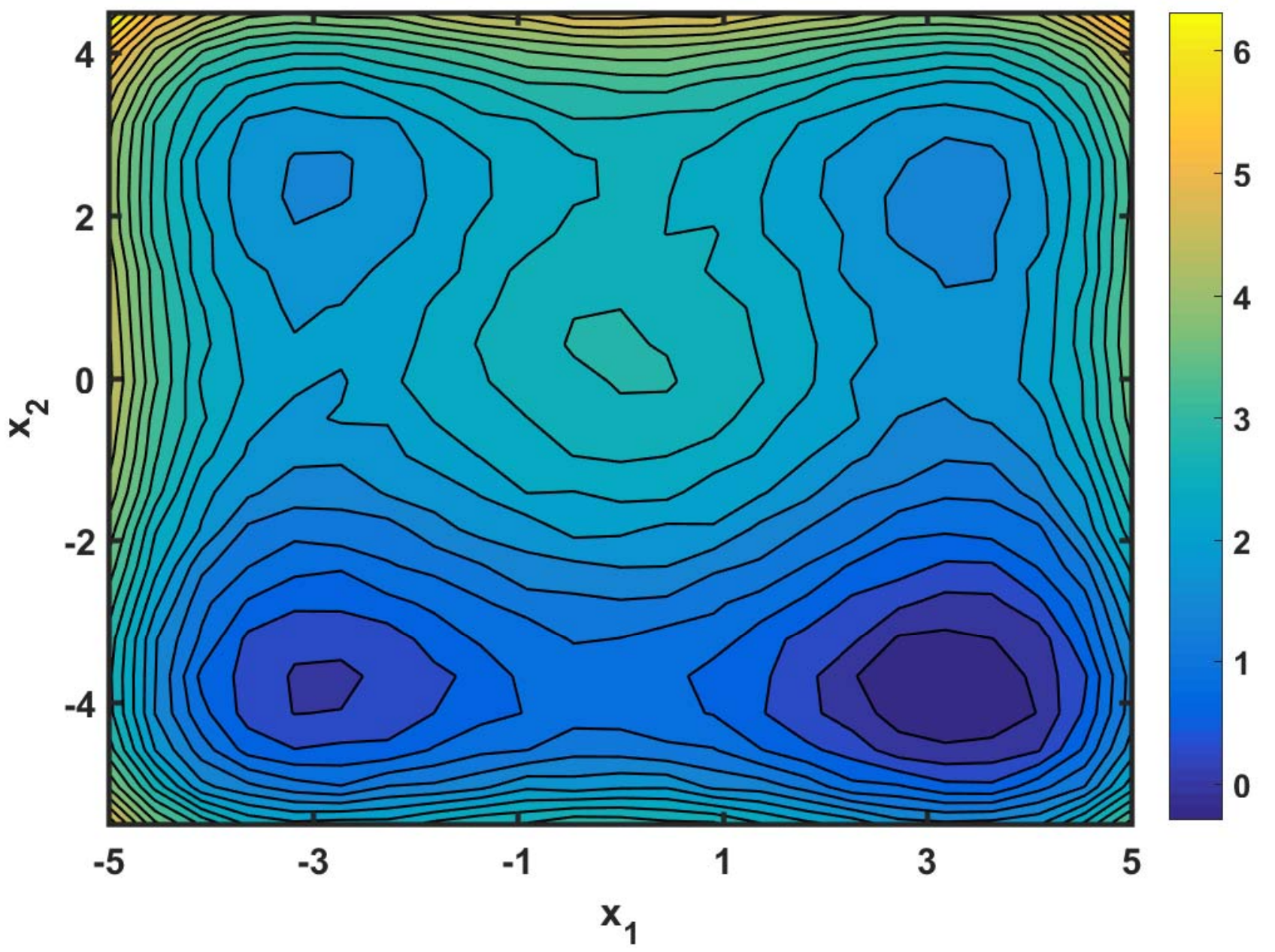}
    \caption{Periodically perturbed quadruple-well potential (\ref{poten}) used 
in our FPE example.}
		\label{fig:fpe_potential}
\end{figure}

\clearpage
\begin{figure}
    \centering
    \includegraphics[width=1.00\textwidth]{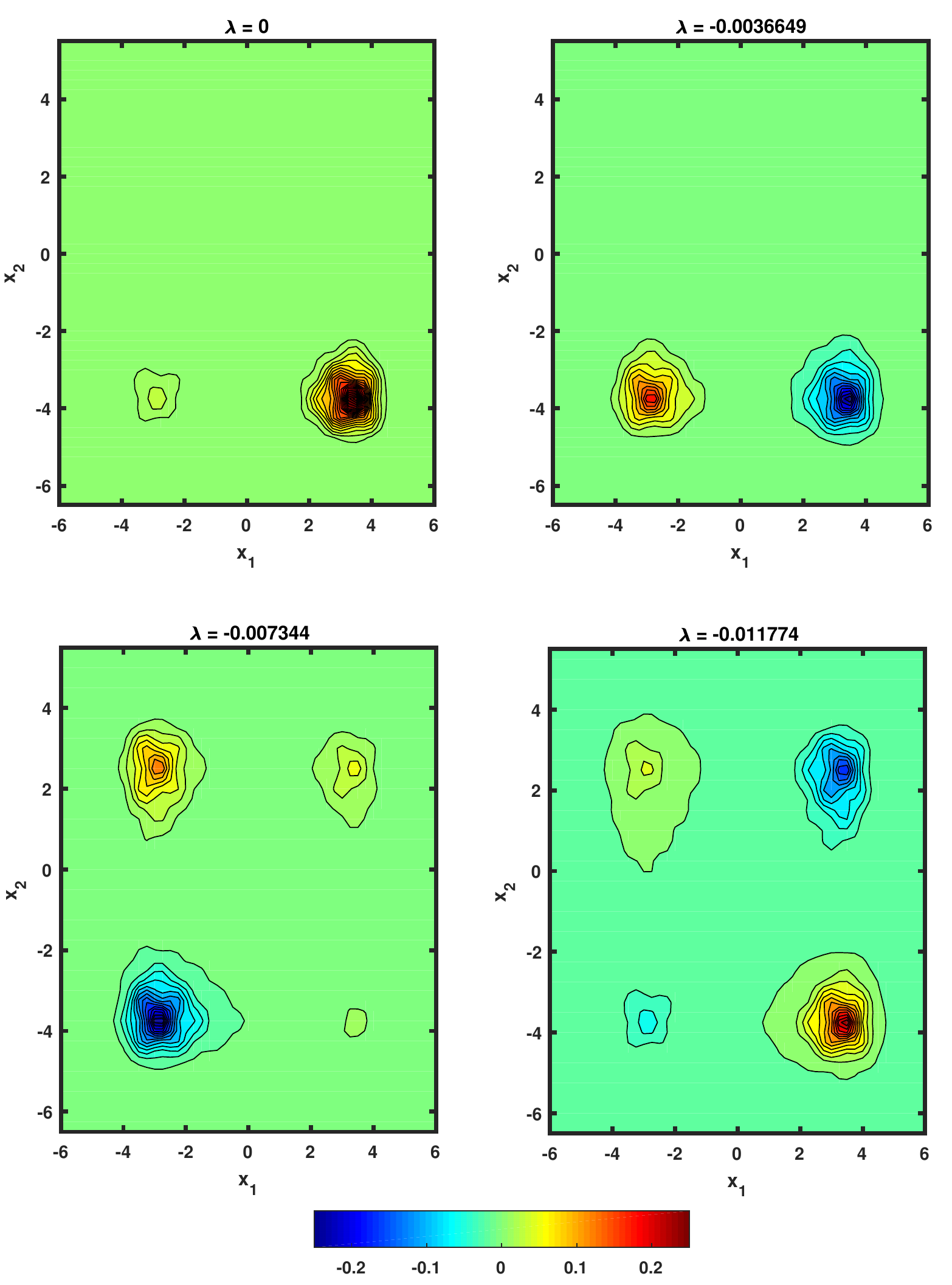}
    \caption{Eigenvectors of the discretization matrix $A$ for our FPE example, 
associated with the first four right eigenvalues $\lambda$ for $\beta=4$. Note 
that the eigenvector for $\lambda=0$ (upper left panel) corresponds to the 
canonical density (\ref{canDens}).}
      \label{fig:fpe_cluster1}
\end{figure}

\clearpage
\begin{figure}
  \centering
  \includegraphics[width=0.5\textwidth]{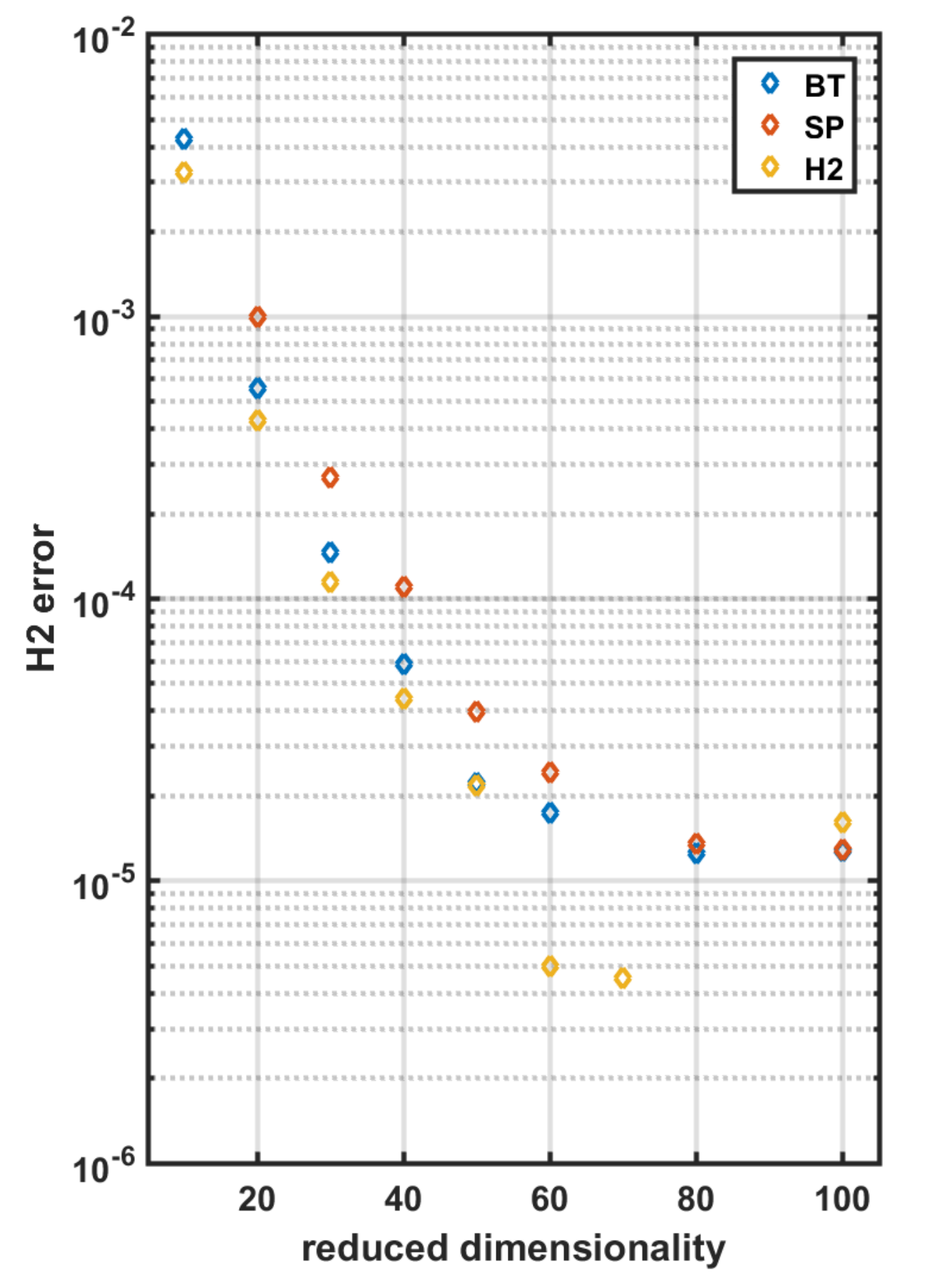} 
  \caption{${\mathcal H}_2$ error versus reduced dimension for the FPE example 
for $\beta=4$. Comparison of BT method, SP method, and H2 method. Values that 
are not shown are those for which the computed error has dropped below machine 
precision (see Sec.~\ref{sec:fpe_results}). }
  \label{fig:fpe_h2error}
\end{figure}

\clearpage
\begin{figure}
\centering
  \includegraphics[width=1.00\textwidth]{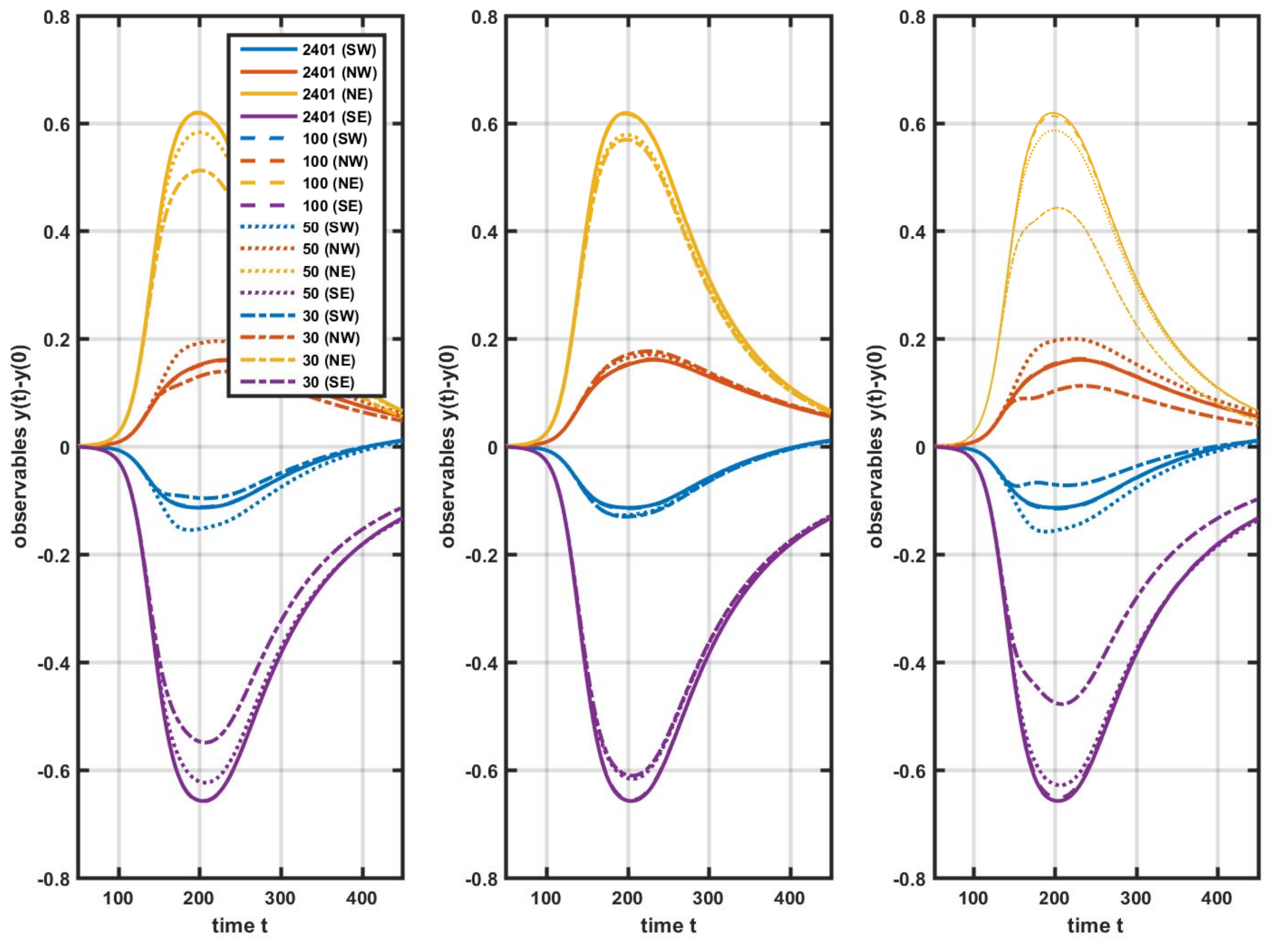} 
  \caption{Time evolution of observables for the FPE example for $\beta=4$ and 
for the control field given by Eq. (\ref{eq:gauss_control}) with $t_0=150$, 
$\tau=100$, and $a=0.5$: populations of the four quadrants of the $x_1$-$x_2$ 
plane for full ($n=2401$) versus reduced dimensionality.  From left to right: 
BT 
method, SP method, and H2 method}
  \label{fig:fpe_populations}
\end{figure}

\clearpage
\begin{figure}
  \centering
  \includegraphics[width=1.00\textwidth]{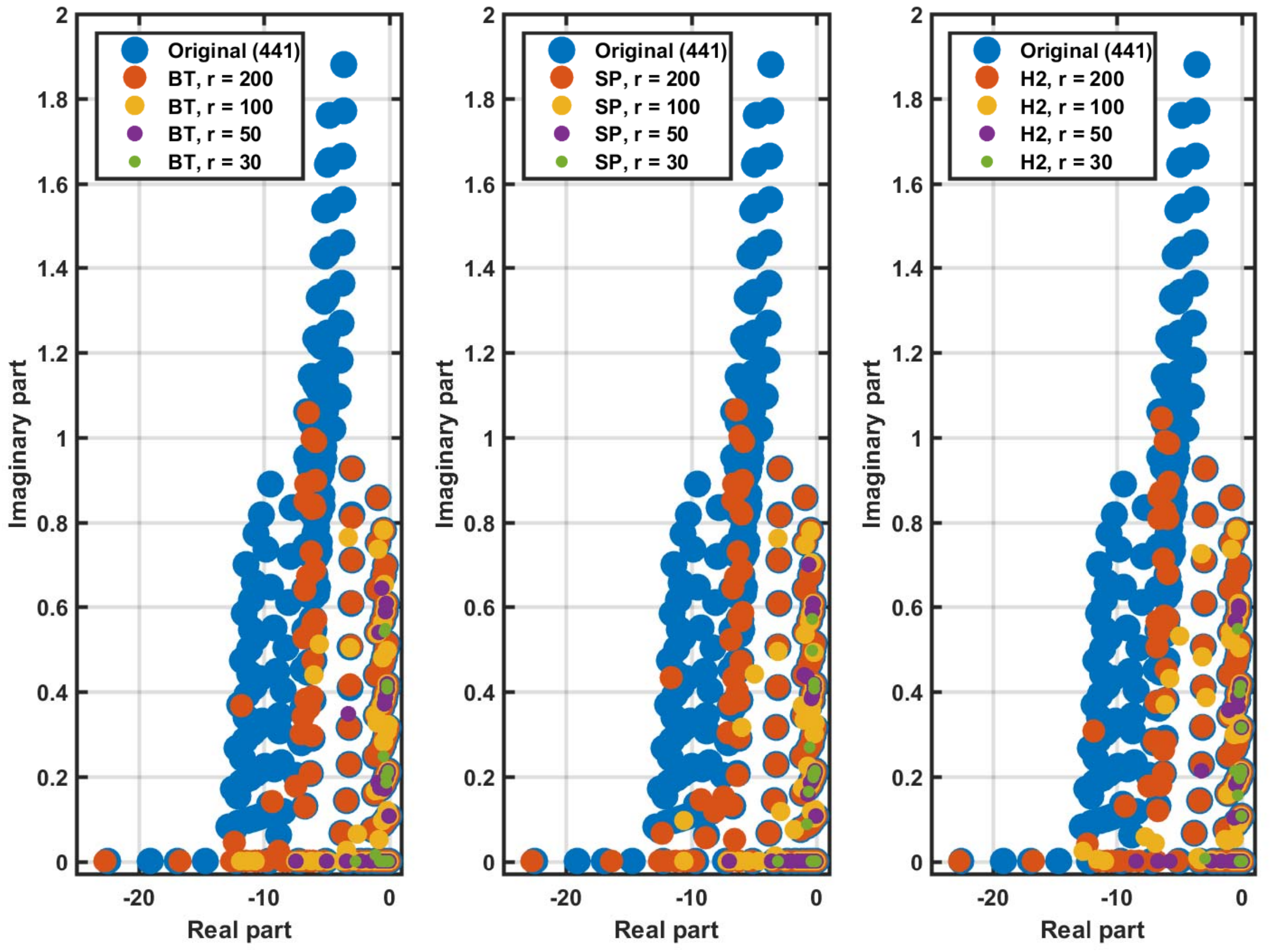} 
  \caption{Spectrum of the $A$ matrix for the LvNE example for full versus 
reduced dimensionality.  For relaxation rate $\Gamma=0.1$ and temperature 
$\Theta=0.1$. From left to right: BT method, SP method, and H2 method}
  \label{fig:lvne_spectrumA}
\end{figure}

\clearpage
\begin{figure}
  \centering
  \includegraphics[width=1.00\textwidth]{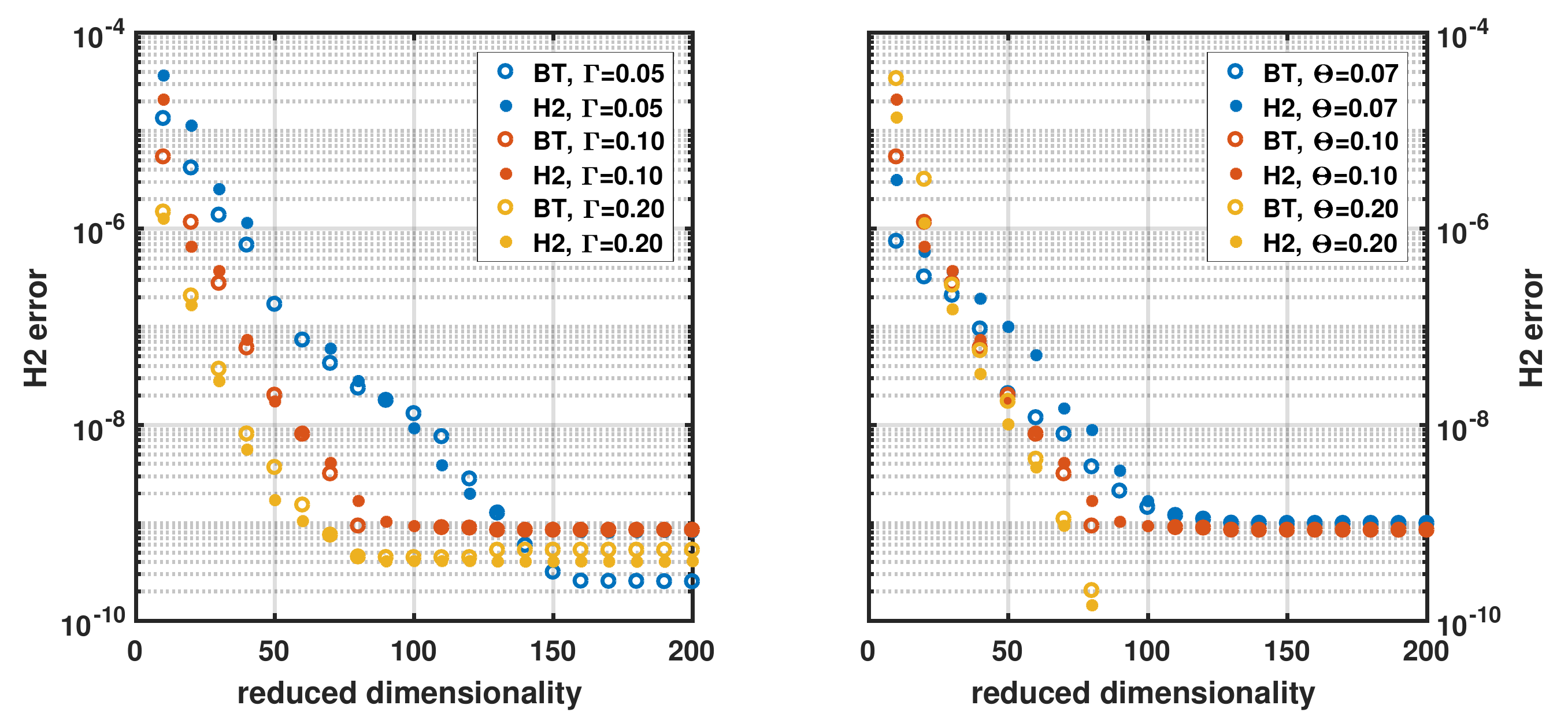} 
  \caption{${\mathcal H}_2$ error versus reduced dimensionality $d$ for the 
LvNE 
example. Simulation results for which the error has dropped below machine 
precision are considered numerical artifact and thus are not shown. Left: For 
various values of the relaxation rate $\Gamma$ (for constant temperature, 
$\Theta=0.1$) .
  Right: For various values of the temperature $\Theta$ (for constant 
relaxation, $\Gamma=0.1$)}
  \label{fig:lvne_h2error}
\end{figure}

\clearpage
\begin{figure}
\centering
  \includegraphics[width=1.00\textwidth]{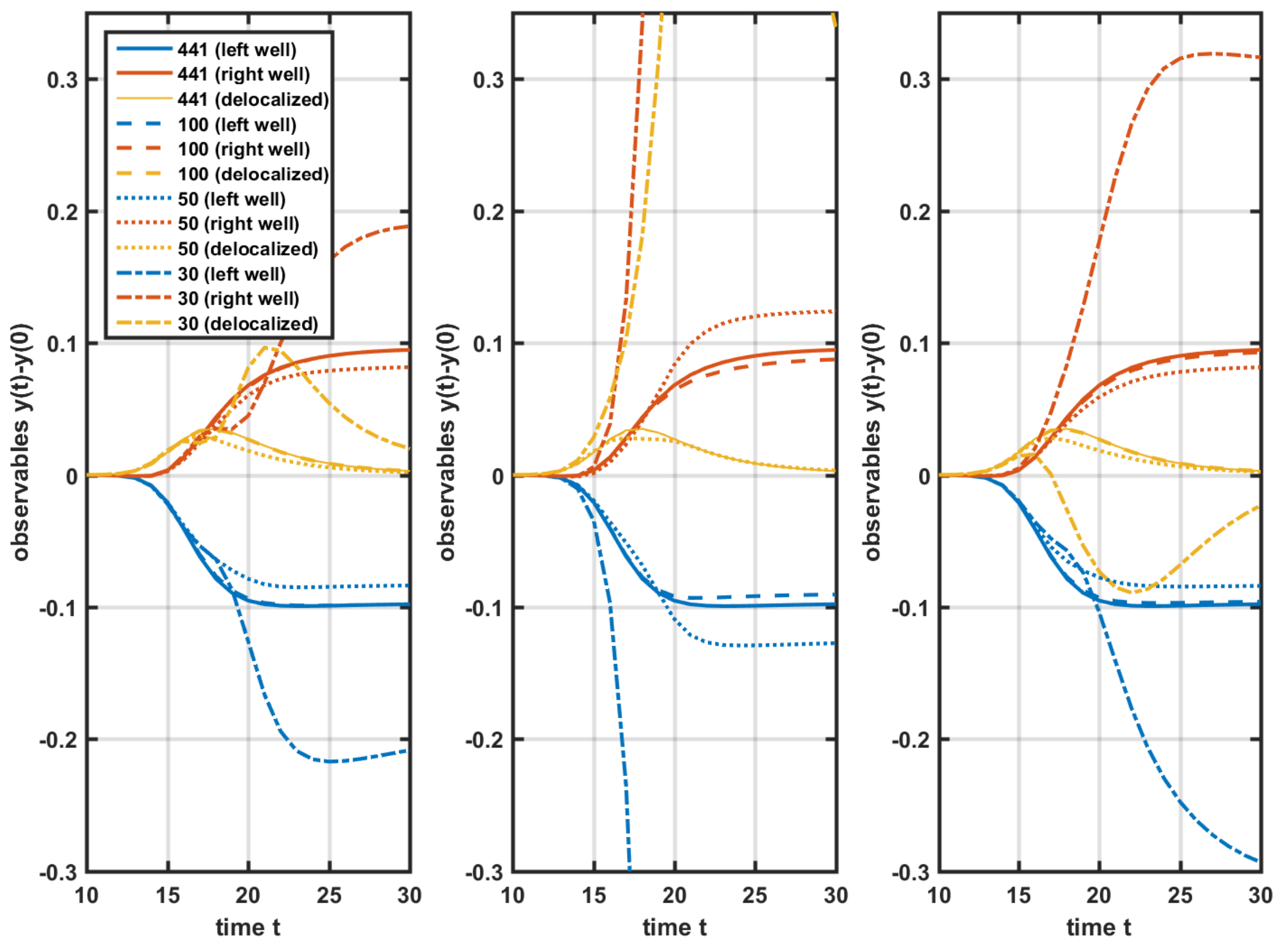} 
  \caption{Time evolution of observables for the LvNE example for relaxation 
rate $\Gamma=0.1$ and temperature $\Theta=0.1$. The control field is given by 
Eq. (\ref{eq:gauss_control}) with $a=3$, $t_0=15$, and $\tau=10$. Populations 
of 
states localized in the left well, in the right well, and delocalized states 
over the barrier, for full ($n=441$) versus reduced dimensionality. From left 
to 
right: BT method, SP method, and H2 method}
  \label{fig:lvne_populations}
\end{figure}

\end{document}